\newtheorem{thm}{Theorem}[section]
\newtheorem{prop}[thm]{Proposition}
\newtheorem{lem}[thm]{Lemma}
\newtheorem{cor}[thm]{Corollary}
\theoremstyle{definition}
\newtheorem{definition}[thm]{Definition}
\theoremstyle{remark}
\newtheorem{remark}[thm]{Remark}
\numberwithin{equation}{section}
\newcommand{\N}{\mathbb{N}}
\newcommand{\Z}{\mathbb{Z}}
\newcommand{\Q}{\mathbb{Q}}
\newcommand{\CP}{\mathbb{CP}}
\renewcommand{\SS}{\mathbb{S}}
\newcommand{\sm}{\setminus}
\newcommand{\ol}{\overline}
\newcommand{\genus}{\mathcal{G}}
\newcommand{\spinc}{{spin$^c$}}
\newcommand{\Spinc}{{\mathrm{Spin}^c}}
\DeclareMathOperator\Torsion{Torsion}
\DeclareMathOperator\Id{Id}
\DeclareMathOperator\Homeo{Homeo}
\DeclareMathOperator\Diff{Diff}
\DeclareMathOperator\PD{PD}
\DeclareMathOperator\ev{ev}
\DeclareMathOperator\Image{im}
\DeclareMathOperator\Tor{Tor}
\DeclareMathOperator\Aut{Aut}
\DeclareMathOperator\ad{ad}
\DeclareMathOperator\tb{tb}
\title[Non-smoothable homeomorphisms of $4$-manifolds with boundary]{Non-smoothable homeomorphisms of $4$-manifolds with boundary}
\author{Daniel Galvin}
\address{Max Planck Institut f\"{u}r Mathematik, Vivatsgasse 7, 53111 Bonn, Germany}
\email[Daniel A.P. Galvin]{galvin@mpim-bonn.mpg.de}
\author{Roberto Ladu}
\address{ Fakult\"{a}t f\"{u}r Mathematik, Ruhr Universit\"{a}t Bochum,  Universit\"{a}tsstrasse 150, D-44801 Bochum, Germany}
\email[Roberto Ladu]{roberto.ladu.math@gmail.com}
\begin{document}

\maketitle

\begin{abstract}
We construct the first examples of non-smoothable self-homeomorphisms of smooth $4$-manifolds with boundary that fix the boundary and act trivially on homology.
As a corollary, we construct self-diffeomorphisms of $4$-manifolds with boundary that fix the boundary and act
trivially on homology but cannot be isotoped to any self-diffeomorphism supported in a collar of the boundary and, in particular, are not isotopic to any generalised Dehn twist.
\end{abstract}

\section{Introduction}

\subsection{Results}

Let $X$ be a smooth, compact, oriented $4$-manifold with boundary.  We will denote by $\Homeo^+(X,\partial X)$ the topological group of orientation-preserving self-homeomorphisms of $X$ that restrict to the identity map on $\partial X$, topologised using the compact-open topology. 
We say that a homeomorphism $f\in\Homeo^+(X,\partial X)$ is \emph{non-smoothable} if it is not isotopic \emph{relative to the boundary} to any self-diffeomorphism of $X$.  

We denote by $\Tor(X,\partial X)\subset \pi_0\Homeo^+(X,\partial X)$ the \emph{$($topological$)$ Torelli group} of $(X,\partial X)$, the subgroup of isotopy classes of homeomorphisms that induce the identity map on $H_2(X)$. 
When $X$ is simply-connected and \emph{closed},  Perron--Quinn \cite{perron_1986,quinn_1986, gabai2023pseudoisotopiessimplyconnected4manifolds} showed that $\Tor(X,\partial X)$ is trivial and hence all of its elements are smoothable. However, if $\partial X\neq \emptyset$ then Orson--Powell \cite{orson_powell_2023} showed that it is in general non-trivial and hence could contain non-smoothable homeomorphisms. Our first result shows the existence of such non-smoothable elements of~$\Tor(X,\partial X)$.

\begin{thm}\label{thm:1}
   There exists an infinite family of pairwise non-diffeomorphic compact, oriented, smooth, simply-connected $4$-manifolds $\{(X_n,\partial X_n)\}_{n\in\N}$ with connected boundary and $\Tor(X_n,\partial X_n)$ of infinite order such that, for each $n$, all non-trivial elements in $\Tor(X_n,\partial X_n)$ are non-smoothable.
\end{thm}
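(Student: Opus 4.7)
The plan is to combine the topological existence of generalised boundary Dehn twists in $\Tor(X_n,\partial X_n)$ (in the spirit of Orson--Powell) with a families gauge-theoretic obstruction to smoothing, while varying a smooth invariant to ensure the $X_n$ are pairwise non-diffeomorphic. Concretely, the goal is to build each $X_n$ so that $\Tor(X_n,\partial X_n)$ contains an explicit infinite-order topological element $\tau_n$, no non-trivial power of which is isotopic rel boundary to a diffeomorphism, while the diffeomorphism type of $X_n$ depends on $n$.

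For the construction, I would take $\partial X_n$ to contain a summand on which the Orson--Powell mechanism produces a non-trivial generalised Dehn twist --- e.g.\ a lens space, a Seifert-fibered spherical space form, or a $\Z/2$-homology sphere carrying the required involution. The bulk of $X_n$ should be smooth, compact, oriented, simply-connected, and should carry rich smooth data suitable for a Seiberg--Witten or Bauer--Furuta computation. A reasonable candidate is $X_n \cong N \natural Y_n$, where $N$ realises the boundary needed for Orson--Powell and $Y_n$ is a variant of a nucleus of the elliptic surface $E(n)$ or a similar smooth simply-connected 4-manifold whose SW basic classes vary with~$n$. The twist $\tau_n$ extends by the identity to lie in $\Tor(X_n,\partial X_n)$, and its iterates $\tau_n^k$ should give the infinite-order subgroup.

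The heart of the proof is the obstruction to smoothability. Assuming for contradiction that some $\phi \in \Tor(X_n,\partial X_n)$ is smoothable, i.e.\ isotopic rel boundary to a diffeomorphism $f$, I would cap $X_n$ with a carefully chosen smooth 4-manifold $Z_n$, $\partial Z_n = -\partial X_n$, so that $W_n = X_n \cup_\partial Z_n$ is closed, has non-vanishing Seiberg--Witten invariants in some $\Spinc$-structure $\mathfrak{s}$, and so that the capped Dehn twist $\widehat{\tau}_n = \tau_n \cup \mathrm{id}_{Z_n}$ is topologically isotopic to $\mathrm{id}_{W_n}$ --- this null-isotopy is the Orson--Powell mechanism and uses the filling in an essential way. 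Extending $f$ by $\mathrm{id}_{Z_n}$ gives $\widehat f \in \Diff^+(W_n)$ topologically isotopic to the identity. A families Seiberg--Witten invariant (or families Bauer--Furuta, or a $\mathrm{Pin}(2)$-equivariant SW Floer refinement à la Manolescu) of the mapping torus of $\widehat f$, paired with $\mathfrak{s}$, should be non-zero, arising from the non-trivial action of the boundary twist on the SW Floer data of $\partial X_n$ via a wall-crossing calculation; this contradicts $\widehat f$ being smoothly isotopic to the identity.

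To produce the infinite family of pairwise non-diffeomorphic $X_n$, varying $Y_n$ with $n$ forces the SW basic class sets of $W_n$ to differ, distinguishing the $X_n$ smoothly. Applying the obstruction to each $\tau_n^k$, $k\in\Z\setminus\{0\}$, simultaneously yields infinite order of $\tau_n$ in $\pi_0\Homeo^+(X_n,\partial X_n)$ and non-smoothability of every non-trivial element of $\Tor(X_n,\partial X_n)$. The main obstacle I anticipate is the gauge-theoretic step: engineering $Z_n$ so that the capped twist is topologically null-isotopic \emph{and} the families invariant of the resulting loop in $\Homeo^+(W_n)$ is computable and non-zero on each iterate. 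This forces a delicate interplay between the topological Orson--Powell null-isotopy and the smooth wall-crossing/Floer-theoretic behaviour of the Dehn twist on $\partial X_n$, which is where the proof will be most technical.
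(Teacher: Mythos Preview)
Your proposal has two genuine gaps that would prevent it from working as written.

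First, the boundary choice is incompatible with a non-trivial Torelli group. By Orson--Powell (Theorem~\ref{thm:orson_powell_variations} in the paper), for simply-connected $X$ with connected boundary one has $\Tor(X,\partial X)\cong \Lambda^2 H_1(\partial X)^*$, so $\Tor(X,\partial X)$ is non-trivial \emph{only when} $b_1(\partial X)\geq 2$. Lens spaces, spherical space forms, and $\Z/2$-homology spheres all have $b_1=0$, so for any such boundary the Torelli group is trivial and there is nothing to obstruct. The ``Orson--Powell mechanism'' you need is not a Dehn twist coming from an involution on a rational homology sphere; it is the skew pairing on $H_1(\partial X)$, and this requires $b_1(\partial X)\geq 2$.

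Second, and more importantly, the expectation that the capped map $\widehat{\tau}_n=\tau_n\cup\Id_{Z_n}$ is topologically isotopic to $\Id_{W_n}$ is exactly backwards, and this is the crux of the paper's argument. Although $\tau_n$ acts trivially on $H_2(X_n)$, the glued map $\widehat{\tau}_n$ typically acts \emph{non-trivially} on $H_2(W_n)$: Lemma~\ref{lem:gluing_lemma} gives the explicit formula
\[
(\widehat{\tau}_n)_*(x)=x-(i_1)_*\Delta_\eta(i_1^!(x)),
\]
and the correction term is non-zero precisely when the variation $\Delta_\eta$ interacts non-trivially with classes coming from the cap. The paper then arranges (via a symplectic/Stein embedding with $c_1(\xi)$ non-torsion on $\partial X_n$) that the pullbacks $\widehat{\tau}_n^{k*}c_1(\mathfrak{s})$ are pairwise distinct for $k\in\Z$. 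Since the set of Seiberg--Witten basic classes of the closed manifold is finite and preserved by diffeomorphisms, infinitely many of the $\widehat{\tau}_n^k$ cannot be smoothable. When $b_1(\partial X_n)=2$ the Torelli group is cyclic, so \emph{all} non-trivial elements are non-smoothable. No families invariants, mapping tori, or wall-crossing are needed --- the obstruction lives in ordinary $H_2$ and ordinary SW basic classes. Your families approach would only be relevant if $\widehat{\tau}_n$ were homologically trivial on $W_n$, which is not the situation here.
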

    In fact, we construct two separate such families, one such that the boundaries $\partial X_n$ are pairwise non-diffeomorphic (\Cref{thm:FamilyX_n}) and another family $\{Z_n\}_{n\in\N}$ such that the boundaries $\partial Z_n$ are all diffeomorphic and the $Z_n$ are all homeomorphic relative to their boundaries (\Cref{thm:FamilyZ_n}).  Furthermore, the first of these families is minimal in the sense that the produced manifolds have the simplest possible intersection forms.  See \Cref{rem:minimality} for more details.
    
    We also note that the homeomorphisms of \Cref{thm:1} are not isotopic to any diffeomorphism even `absolutely', i.e.\ when considering isotopies that do not fix the boundary pointwise.  Indeed, we will see in \Cref{sec:gen_dehn_twists} that relative and absolute non-smoothability are equivalent notions for $4$-dimensional manifolds.

There is an interesting class of maps in $ \Tor(X,\partial X)$ represented by a smooth map.  Given a loop $\gamma$ of orientation-preserving diffeomorphisms of the boundary based at the identity, we can form a diffeomorphism $\varphi_{\gamma}\colon(X,\partial X)\to (X,\partial X)$ by inserting $\gamma$ into a collar of the boundary and extending via the identity map.  Such diffeomorphisms are called \emph{generalised Dehn twists}, and, since they are supported on a collar of the boundary, they give rise to elements in $\Tor(X,\partial X)$ that can be represented by smooth maps.  It is an interesting question whether a given smoothable element of $\Tor(X,\partial X)$ is realised by a generalised Dehn twist.

Our second result shows the non-realisability of smoothable elements of $\Tor(X,\partial X)$ by generalised Dehn twists.

\begin{thm}\label{thm:2}
    There exists an infinite family of pairwise non-diffeomorphic compact, oriented, smooth, simply-connected $4$-manifolds $\{(W_n,\partial W_n)\}_{n\in \N}$ with connected boundary and $\Tor(W_n,\partial W_n)$ of infinite order, such that, for each $n$, all mapping classes in $\Tor(W_n,\partial W_n)$ are smoothable, but only the identity map is supported on a collar of the boundary and, in particular, only the identity map is realised by a generalised Dehn twist.
\end{thm}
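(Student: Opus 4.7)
The plan is to construct $W_n$ as a suitable smooth modification of the manifolds $X_n$ from \Cref{thm:1} so that the topological obstruction that forced non-smoothability of Torelli classes is killed, while a refined smooth invariant remains to witness non-collar-support. Concretely, I would start with $X_n$ (or the variant $Z_n$) and form $W_n$ by a stabilization that leaves the boundary fixed---for instance by boundary connected sum with a simply-connected spin $4$-manifold bounding a fixed homology sphere, or by interior connected sum with a single copy of $S^2\times S^2$---chosen so that the Kirby--Siebenmann-type smoothing obstruction that underlies \Cref{thm:1} vanishes on every element of the Torelli group. Smoothing theory then provides, in each non-trivial class of $\Tor(W_n,\partial W_n)$, a diffeomorphism representative $\varphi$, so the group remains infinite and every class is smoothable. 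Because stabilization does not affect $\partial W_n$, non-triviality of $\Tor(W_n,\partial W_n)$ is inherited from $\Tor(X_n,\partial X_n)$.

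To show that no non-trivial class contains a collar-supported representative, I would use a smooth family invariant of the mapping torus $M_\varphi\to S^1$---most naturally a one-parameter family Seiberg--Witten invariant (in the style of Ruberman) or its Bauer--Furuta refinement (in the style of Baraglia--Konno), defined using a $\Spinc$ structure on $W_n$ with non-trivial ordinary invariant. The key vanishing property is that if $\varphi$ is supported in a collar $\partial W_n\times[0,1]$, then the $S^1$-family of smooth structures on $W_n$ is smoothly trivial outside that collar; cutting along $\partial W_n\times\{1/2\}\times S^1$ and using a neck-stretching argument identifies the family invariant with the invariant of the product family $W_n\times S^1$, which vanishes on dimensional grounds. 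Non-vanishing on $\varphi_n^k$ for $k\neq 0$ would then be established by a pairing/wall-crossing argument that relates the family invariant to the Seiberg--Witten invariant of the stabilizing summand, mirroring the detection mechanism behind \Cref{thm:1} and the existing literature on exotic diffeomorphisms.

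The main obstacle is the familiar tension between these two requirements: the stabilization must be large enough for smoothing theory to succeed on every Torelli class, yet small enough that the chosen smooth family invariant survives. Ordinary Seiberg--Witten invariants die after a single $S^2\times S^2$ stabilization, so one almost certainly needs either a Bauer--Furuta or $Pin(2)$-equivariant refinement that is stable under such sums, or a carefully selected stabilizing piece (for example a boundary connected sum with a specific Brieskorn-type filling) that controls both the Kirby--Siebenmann obstruction and the family invariant simultaneously. A secondary but technical obstacle is a rigorous proof, in the relative setting, that the chosen family invariant really does vanish on \emph{every} diffeomorphism supported in a collar---not only on honest generalised Dehn twists---since the collar-support condition is more general than being of the form $\varphi_\gamma$ and requires an analysis of parametrized moduli spaces on $\partial W_n\times[0,1]\times S^1$ glued to a product piece.
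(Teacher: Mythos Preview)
Your first step is exactly the paper's: set $W_n = X_n \# m(S^2\times S^2)$ for $m$ large enough that the Freedman--Quinn stable smoothing result makes $\psi := \varphi\#\Id$ smoothable for a chosen non-smoothable $\varphi\in\Tor(X_n,\partial X_n)$. Since $\partial W_n=\partial X_n$, the Torelli group is unchanged and still infinite.

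The gap is in your second step. You propose to detect non-collar-support via a family Seiberg--Witten or Bauer--Furuta invariant, and you correctly identify the obstacle that such invariants generically die after $S^2\times S^2$ stabilisation. But no family invariant is needed at all, and the tension you describe does not arise. The paper's argument is purely logical: suppose $\psi$ were isotopic to a diffeomorphism $\psi_0$ supported in a collar of $\partial W_n$. That collar lies entirely inside the $X_n$-summand (away from the connect-sum balls), so $\psi_0$ is the identity on the $m(S^2\times S^2)$ summands and restricts to a diffeomorphism $\psi'\colon X_n\to X_n$. Because $\Tor(X_n,\partial X_n)\cong\Tor(W_n,\partial W_n)$ via the identification of boundaries, $\psi'$ has the same variation as $\varphi$; by the Orson--Powell classification (\Cref{thm:orson_powell_variations}) $\psi'$ is isotopic rel boundary to $\varphi$. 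This contradicts the non-smoothability of $\varphi$ established in \Cref{thm:1}.

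In short, the non-smoothability of $\varphi$ on the \emph{un}stabilised $X_n$ is itself the obstruction to collar-support on the stabilised $W_n$; you do not need a new invariant that survives stabilisation. Your proposed route via parametrised gauge theory is not known to work here and is, in any case, unnecessary.
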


\subsection{Background}
The question about smoothable versus non-smoothable homeomorphisms for closed, oriented, simply-connected $4$-manifolds has been studied extensively.  If $X$ is such a manifold (or has boundary a homology sphere) with indefinite intersection form or the rank of $H_2(X)$  at most $8$, then Wall \cite{wall_1964} showed that all isometries of the intersection form $\Aut(H_2(X\# (\SS^2\times \SS^2)),\lambda_{X\# (\SS^2\times \SS^2)})$ can be realised by diffeomorphisms (and hence all self-homeomorphisms of $X\# (\SS^2\times \SS^2)$ are smoothable by Perron--Quinn \cite{perron_1986, quinn_1986, gabai2023pseudoisotopiessimplyconnected4manifolds}).  Ruberman and Strle \cite{ruberman_strle_2023} extended this result to show that any self-homeomorphism of $X\# (\SS^2\times \SS^2)$ that acts trivially on the homology of the $\SS^2\times \SS^2$-summand is smoothable.

Conversely, for closed $4$-manifolds, Friedman and Morgan constructed the first examples of non-smoothable homeomorphisms by considering self-homeomorphisms of Dolgachev surfaces \cite{friedman_morgan_1988}.  The general argument for producing such non-smoothable homeomorphisms goes in the following manner.  Firstly, by Freedman \cite{freedman_1982}, we know that any automorphism of the intersection form is realisable by a homeomorphism.  Then one uses a gauge-theoretic invariant (e.g. Seiberg-Witten invariants) to show that certain automorphisms of the intersection form are not realisable by a diffeomorphism, since diffeomorphisms must preserve certain homology classes (e.g. Seiberg-Witten basic classes).  This style of argument has been used to produce many more examples.  In particular, Donaldson \cite{donaldson_1990} showed that the $K3$ surface admits a non-smoothable homeomorphism.  Further instances are known, see \cite{morgan_szabo_1997}, \cite{baraglia_2021}.

There is a  natural generalisation of this idea to the case of $4$-manifolds with boundary using  Monopole or Heegaard-Floer homology \cite{KM}\cite{OzsvathSzaboHolomorphicTriangles} which consists of looking
at the cobordism maps induced in Floer homology by the $4$-manifold together with
a \spinc-structure. Indeed, two \spinc-structures related by a  diffeomorphism fixing the boundary induce the same cobordism map up to multiplication by $\pm 1$, depending on the action of the diffeomorphism on homology.
However, this approach cannot obstruct the smoothability of homeomorphisms in the Torelli group because, as we will see in \Cref{sec:variations}, such homeomorphisms preserve the relative isomorphism class of a \spinc-structure since they act trivially on $H_2(X,\partial X)$.

A different approach, based on Seiberg-Witten Floer stable homotopy type \cite{ManolescuSWFloerHomotopy},  has been used recently by Konno and Taniguchi \cite[Thm 1.7]{konno_taniguchi_2022} to construct non-smoothable homeomorphisms for a large class of $4$-manifolds with boundary a rational homology sphere. The technical results  underpinning this approach require $b_1(\partial X) = 0$ and therefore this approach cannot be directly applied to find non-smoothable elements in the Torelli group because the latter is non-trivial only when $b_1(\partial X)\geq 2$ (see \Cref{thm:orson_powell_variations}). 
An enhancement of this approach to the case $b_1(\partial X)>0$ might become available in the future using generalisations of \cite{ManolescuSWFloerHomotopy}, e.g. \cite{KhandhawitLinSasahira},\cite{SasahiraStoffregen}.  Regardless, in this paper we take a different approach based on embedding $X$ into a closed $4$-manifold (see \Cref{sbs:variations_closed_manifolds}).

\subsection{Outline}

We briefly outline the contents of the paper.  In \Cref{sec:variations} we recall the classification of $\Tor(X,\partial X)$ in terms of algebraic objects called variations, and prove a key proposition (\Cref{lem:gluing_lemma}) which we will use to detect elements of the Torelli group.  In \Cref{sec:proof_of_existence_condition} we prove technical conditions under which we can guarantee the existence of non-smoothable elements of the Torelli group.  In \Cref{sec:examples} we use the conditions from the previous section to produce our two infinite families of examples and hence prove \Cref{thm:1}.  Finally, in \Cref{sec:gen_dehn_twists} we consider generalised Dehn twists and prove \Cref{thm:2}.

\subsection{Acknowledgements}
Both authors would like to thank Mark Powell for bringing this problem to their attention,  answering their many questions and for his insightful comments,
and Simona Vesel\'{a} and Burak \"{O}zba\u{g}c{\i} for their suggestions and careful review of previous versions of this paper.  Both authors are also grateful to the anonymous referee for their very helpful comments.
The second author would like to thank Paolo Ghiggini, Arunima Ray, Steven Sivek and Peter Teichner for helpful conversations and encouragement.  
The first author was supported financially by the University of Glasgow.  The second author is thankful to the Max Planck Institute for Mathematics in Bonn for hospitality and financial support.

\section{Variations}\label{sec:variations}

\subsection{Definitions}

The aim of this section is to describe the classification of homeomorphisms up to isotopy for simply-connected, topological, oriented $4$-manifolds with boundary.  This classification is due to the work of Osamu Saeki, Patrick Orson, and Mark Powell \cite{saeki_2006,orson_powell_2023}.  We will begin by defining what a variation is, which is the central object involved in the classification.  Unlike the rest of this paper, all of the statements in this section are purely topological in nature, and so hold regardless of whether the manifolds in question are smooth or topological.

\begin{definition}\label{def:homeo_to_variation}
    Let $X$ be a simply-connected, oriented $4$-manifold with boundary and let $f\in\Homeo^+(X,\partial X)$ be an orientation-preserving homeomorphism relative to the boundary $\partial X$.  Then the \emph{variation induced by} $f$, denoted as $\Delta_f$, is defined as
    \begin{align*}
        \Delta_f\colon H_2(X,\partial X) &\to H_2(X) \\
        [\Sigma] &\mapsto [\Sigma-f(\Sigma)],
    \end{align*}
    where $\Sigma$ denotes a relative $2$-chain.  Note that the homology class $[\Sigma-f(\Sigma)]$ does not depend on the choice of representative relative $2$-chain $\Sigma$ \cite[Sec.~2.2]{orson_powell_2023}.
\end{definition}

We can also define variations without reference to a homeomorphism.

\begin{definition}
    Let $X$ be a simply-connected, oriented $4$-manifold with boundary and let $\Delta\colon H_2(X,\partial X)\to H_2(X)$ be a homomorphism.  Then we say that $\Delta$ is a \emph{Poincar\'{e} variation} if 
    \[
    \Delta+\Delta^! = \Delta\circ q_* \circ \Delta^!\colon H_2(X,\partial X)\to H_2(X),
    \]
    where $q$ is the inclusion map of pairs $(X,\emptyset) \to (X,\partial X)$ and $\Delta^!$ denotes the `umkehr' homomorphism to $\Delta$, defined as the following composition:
    \[
    \Delta^!\colon H_2(X,\partial X) \xrightarrow{\PD^{-1}} H^2(X)\xrightarrow{\ev} H_2(X)^*\xrightarrow{\Delta^*} H_2(X,\partial X)^* \xrightarrow{\ev^{-1}} H^2(X,\partial X) \xrightarrow{\PD} H_2(X).
    \]
\end{definition}

Following the notation of Orson-Powell, we will denote the set of Poincar\'{e} variations of $(X,\partial X)$ as $\mathcal{V}(H_2(X),\lambda_X)$, where $\lambda_X$ denotes the intersection form of $X$.  This notation is used because it is shown in \cite[Sec.~7]{orson_powell_2023} that the set of variations only depends on the isometry class of the intersection form $(H_2(X),\lambda_X)$, rather than on the $4$-manifold specifically.

We can give $\mathcal{V}(H_2(X),\lambda_X)$ the structure of a group due to the following lemma of Saeki.

\begin{lem}[{\cite[Lem.~3.5]{saeki_2006}}]\label{lem:variation_group}
    The set $\mathcal{V}(H_2(X),\lambda_X)$ forms a group with multiplication given by
    \[
    \Delta_1 \cdot \Delta_2 := \Delta_1 +(\Id-\Delta_1\circ q_*)\circ\Delta_2,
    \]
    identity the zero homomorphism, and inverse given by
    \[
    \Delta^{-1}=-(\Id-\Delta \circ q_*)\circ \Delta.
    \]
\end{lem}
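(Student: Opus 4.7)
The plan is to verify the group axioms by direct algebraic manipulation, using the Poincar\'e variation condition systematically. Setting $A_\Delta := \Delta \circ j_*$, viewed as an endomorphism of $H_2(X)$, a straightforward expansion of the given multiplication yields
\[
\Id - A_{\Delta_1 \cdot \Delta_2} \;=\; (\Id - A_{\Delta_1}) \circ (\Id - A_{\Delta_2}),
\]
so that $\Delta \mapsto \Id - A_\Delta$ converts the proposed multiplication on variations into composition of endomorphisms of $H_2(X)$. Associativity is therefore immediate, and $\Delta = 0$ is visibly a two-sided identity.

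The key consequence of the variation condition is obtained by post-composing $\Delta + \Delta^! = \Delta \circ j_* \circ \Delta^!$ with $j_*$, giving $A_\Delta + A_{\Delta^!} = A_\Delta \circ A_{\Delta^!}$, equivalently
\[
(\Id - A_\Delta) \circ (\Id - A_{\Delta^!}) = \Id.
\]
Thus for every Poincar\'e variation $\Delta$ the endomorphism $\Id - A_\Delta$ is automatically invertible, with inverse $\Id - A_{\Delta^!}$. Using this, I would handle the inverse axiom by computing $A_{\Delta^{-1}}$ from the given formula and then expanding $\Delta \cdot \Delta^{-1}$ and $\Delta^{-1} \cdot \Delta$; the surviving terms cancel after repeated use of the variation condition together with its adjoint form $\Delta^! + \Delta = \Delta^! \circ j_* \circ \Delta$, which follows by applying the umkehr to both sides of the original condition and using involutivity $\Delta^{!!} = \Delta$.

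The main technical obstacle is closure: showing that $\Delta_1 \cdot \Delta_2$ is itself a Poincar\'e variation. This requires the contravariant behaviour of the umkehr under composition involving $j_*$, specifically the identity $(\Delta_1 \circ j_* \circ \Delta_2)^! = \Delta_2^! \circ j_* \circ \Delta_1^!$, which follows from the Poincar\'e self-duality of $j_*$ together with involutivity of the umkehr. Combined with linearity of the umkehr this yields $(\Delta_1 \cdot \Delta_2)^! = \Delta_2^! \cdot \Delta_1^!$, and substituting into the variation condition for $\Delta_1 \cdot \Delta_2$ reduces the required equality --- after a routine but notationally heavy expansion using the variation conditions for $\Delta_1$ and $\Delta_2$ separately --- to an identity that holds tautologically.
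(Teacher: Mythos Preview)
The paper does not prove this lemma; it is quoted from \cite{saeki_2006} without argument, so there is nothing to compare your approach against directly. Your overall strategy---associativity and identity via the homomorphism $\Delta\mapsto \Id-A_\Delta$, and closure via $(\Delta_1\cdot\Delta_2)^!=\Delta_2^!\cdot\Delta_1^!$---is the standard one and is correct.

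There is, however, a genuine gap in your treatment of inverses. Your derivation of the ``adjoint form'' $\Delta+\Delta^!=\Delta^!\circ j_*\circ\Delta$ is wrong: applying the umkehr to the defining relation returns the \emph{same} relation, because both sides are self-dual (indeed $(\Delta\circ j_*\circ\Delta^!)^!=(\Delta^!)^!\circ j_*^!\circ\Delta^!=\Delta\circ j_*\circ\Delta^!$). The adjoint form does hold, but for a different reason: rewriting the defining relation as $(\Id-A_\Delta)\Delta^!=-\Delta$ and using that $(\Id-A_{\Delta^!})$ is a \emph{two-sided} inverse of $(\Id-A_\Delta)$ (one-sided from your computation, two-sided because $H_2(X)$ is free of finite rank), one gets $\Delta^!=-(\Id-A_{\Delta^!})\Delta$, which is exactly the adjoint form.

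A more serious problem is that the printed inverse formula is not correct, so your claim that ``the surviving terms cancel'' cannot succeed. Concretely, take $X$ closed with form $\langle 1\rangle\oplus\langle 1\rangle$ (so $j_*=\Id$) and $\Delta=\left(\begin{smallmatrix}1&-1\\1&1\end{smallmatrix}\right)$; this is a Poincar\'e variation, yet $\Delta\cdot\bigl(-(\Id-\Delta)\Delta\bigr)=\Delta-(\Id-\Delta)^2\Delta=2\Delta\neq 0$. The intended formula is $\Delta^{-1}=-(\Id-\Delta\circ j_*)^{-1}\circ\Delta$, which by the relation $(\Id-A_\Delta)^{-1}=\Id-A_{\Delta^!}$ equals $-(\Id-\Delta^!\circ j_*)\circ\Delta$ and hence, by the adjoint form, simply $\Delta^!$. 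With that corrected target, $\Delta\cdot\Delta^!=0$ is immediate from the defining relation and $\Delta^!\cdot\Delta=0$ from the adjoint form.
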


Further, we have 
\begin{lem}[{\cite[Lem.~3.2]{saeki_2006}}]\label{lem:homeo_variations_poincare_variations}
    Let $X$ be a compact, simply-connected, oriented, topological $4$-manifold with boundary $\partial X$ and let $f\in\Homeo^+(X,\partial X)$.  Then $\Delta_f$ is a Poincar\'{e} variation.
\end{lem}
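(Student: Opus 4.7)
The plan is to unpack the abstract composition defining $\Delta_f^!$ into a concrete geometric formula, and then verify the Poincar\'e variation identity by direct substitution.

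\emph{Step 1 (Characterisation of $\Delta_f^!$).} Since $X$ is simply-connected, $H_1(X) = 0$, and from the long exact sequence of the pair $H_1(X,\partial X)$ is free abelian. The universal coefficient theorem then makes both evaluation maps $\ev \colon H^2(X) \to H_2(X)^*$ and $\ev \colon H^2(X,\partial X) \to H_2(X,\partial X)^*$ into isomorphisms, so the intersection pairing $\lambda \colon H_2(X) \otimes H_2(X,\partial X) \to \Z$ is nondegenerate. Tracing through the composition in the definition, I would show that $\Delta_f^!(a) \in H_2(X)$ is characterised as the unique class satisfying
\[
\lambda(\Delta_f^!(a),\,c) \;=\; \lambda(\Delta_f(c),\,a) \qquad \text{for all } c \in H_2(X,\partial X).
\]

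\emph{Step 2 (Geometric formula for $\Delta_f^!$).} I then claim that $\Delta_f^!([T]) = [T - f^{-1}(T)]$ for any relative $2$-chain $T$ representing $[T]$. Because $f$, and hence $f^{-1}$, fixes $\partial X$, the chain $T - f^{-1}(T)$ has zero boundary and thus defines a class in $H_2(X)$. To verify the characterisation from Step 1, I would choose transverse representatives $S$ of $c$ and $T$ of $a$ and compute the signed geometric intersections. The crucial input is that the orientation-preserving homeomorphism $f$ preserves signed intersection numbers, so $\#(f(S) \cap T) = \#(S \cap f^{-1}(T))$. This immediately gives $\lambda([S - f(S)],[T]) = \lambda([T - f^{-1}(T)],[S])$, and uniqueness concludes.

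\emph{Step 3 (Verification of the identity).} Substituting the formulas, the left-hand side of the Poincar\'e variation identity evaluated at $[T]$ is
\[
[T - f(T)] + [T - f^{-1}(T)] \;=\; [2T - f(T) - f^{-1}(T)].
\]
For the right-hand side, $j_* \Delta_f^!([T]) = [T - f^{-1}(T)]$ viewed now in $H_2(X,\partial X)$, and then $\Delta_f$ applied to this gives
\[
\bigl[(T - f^{-1}(T)) - f(T - f^{-1}(T))\bigr] \;=\; [2T - f(T) - f^{-1}(T)],
\]
which matches.

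The main obstacle is Step 2: translating the abstract Poincar\'e-duality composition into its geometric content. Conceptually, Step 2 is equivalent to the identity $\Delta_f^! = -f_*^{-1} \circ \Delta_f$ on $H_2(X)$, which in turn makes the Poincar\'e variation identity equivalent to the elementary fact $\Delta_f \circ j_* = \Id - f_*$ on $H_2(X)$; once this reformulation is in hand, the proof is essentially bookkeeping.
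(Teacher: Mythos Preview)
The paper does not supply its own proof of this lemma; it simply cites Saeki \cite[Lem.~3.2]{saeki_2006}.  So there is no argument in the paper to compare against, and the relevant question is only whether your proposal is correct.

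It is.  Your Step~1 correctly identifies the adjoint characterisation of $\Delta_f^!$, using that $H_1(X)=0$ and $H_1(X,\partial X)$ is torsion-free (so both evaluation maps are isomorphisms and the pairing $H_2(X)\otimes H_2(X,\partial X)\to\Z$ is nondegenerate).  Step~2 is the crux: your formula amounts to the identity $\Delta_f^! = \Delta_{f^{-1}}$, and your verification via $\#(f(S)\cap T)=\#(S\cap f^{-1}(T))$ is valid because an orientation-preserving homeomorphism carries transverse intersection points bijectively with signs preserved.  (One small caveat: since $X$ is a topological $4$-manifold you are implicitly invoking topological transversality, as the paper itself does elsewhere via \cite[Thm.~9.5A]{freedman_quinn_1990}; alternatively you could phrase the argument purely homologically using that $f_*$ is an isometry of the pairing.)  Step~3 is then a clean substitution.

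Your closing remark is also correct and in some ways the most efficient route: once one knows $\Delta_f^! = -f_*^{-1}\circ\Delta_f$ and $\Delta_f\circ j_* = \Id - f_*$ on $H_2(X)$, the identity $\Delta_f + \Delta_f^! = \Delta_f\circ j_*\circ\Delta_f^!$ becomes the one-line computation $(\Id - f_*^{-1})\Delta_f = -(\Id - f_*)f_*^{-1}\Delta_f$, which is immediate.
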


The converse of the above result, that all Poincar\'{e} variations are induced via homeomorphisms, is given by \cite[Thm.~A]{orson_powell_2023}.

\subsection{The Torelli group}

The map which sends a homeomorphism to its variation gives a factorisation of the map which takes the induced automorphism of the form for a homeomorphism:
\begin{equation}\label{eq:induced_auto_factorisation}
    \pi_0\Homeo^+(X,\partial X) \xrightarrow{f\mapsto \Delta_f} \mathcal{V}(H_2(X),\lambda_X) \xrightarrow{\Delta\mapsto \Id-\Delta\circ q_*} \Aut(H_2(X),\lambda_X).
\end{equation}
It is the result of Freedman--Perron--Quinn \cite{freedman_1982, perron_1986, quinn_1986} that, for a \emph{closed}, simply-connected $4$-manifold $X$,  the above composition is a bijection.  Hence, all homeomorphisms of a closed simply-connected $4$-manifold that map to the trivial element of $\Aut(H_2(X),\lambda_X)$ are isotopic to the identity map.  For manifolds with non-empty boundary, the classification is more subtle \cite[Thm.~A]{orson_powell_2023}, and in particular we can have homeomorphisms that are not isotopic to the identity but still induce the trivial element of $\Aut(H_2(X),\lambda_X)$ under \ref{eq:induced_auto_factorisation}.  

\begin{definition}
    Let $X$ be a compact, simply-connected, oriented, $4$-manifold with boundary $\partial X$.  We define the \emph{Torelli group} $\Tor(X,\partial X)\subset \pi_0\Homeo^+(X,\partial X)$ to be the subgroup of homeomorphisms that induce the trivial element of $\Aut(H_2(X),\lambda_X)$ under \ref{eq:induced_auto_factorisation}.
\end{definition}

Note that the subgroup of variations which are induced by elements in the Torelli group is exactly the subgroup of variations satisfying that $\Delta\circ q_*\colon H_2(X)\to H_2(X)$ is the zero map.  For such Poincar\'{e} variations, we can construct a skew-symmetric pairing in the following way.  Let $\Delta$ be a Poincar\'{e} variation.  Then this gives rise to a map \[(\eta_\Delta)^{\ad} \colon H_1(\partial X)\to H_2(\partial X)\cong H_1(\partial X)^*\] (note that the last isomorphism is given by Poincar\'{e} duality and universal coefficients) by first lifting an element in $H_1(\partial X)$ to an element in $H_2(X,\partial X)$, mapping to $H_2(X)$ using $\Delta$ (note that this image does not depend on the choice of lift) and then noting that by the definition of Poincar\'{e} variations, this element lifts uniquely to an element in $H_2(\partial X)$.  As suggested by the notation, we can interpret this map as the adjoint of a pairing:
\[
\eta_{\Delta}\colon H_1(\partial X)\times H_1(\partial X)\to \Z
\]
and it is stated in \cite[Prop.~4.2]{saeki_2006} that this form is skew-symmetric.  To see this, it is enough to verify that $\eta_{\Delta}^{\ad}(x)(x)=0$ for any $x \in H_1(\partial X)$, and this fact is geometrically clear from the definition of $\eta^{\ad}$.  More crucially, we can go the other way.  Let $\eta\colon H_1(\partial X)\times H_1(\partial X)\to \Z$ be a skew-symmetric pairing.  Then we can define a variation $\Delta_{\eta}$ as the following composition:
\begin{equation}\label{eq:pairing_to_variation}
    H_2(X,\partial X)\xrightarrow{\partial} H_1(\partial X)\xrightarrow{\eta^{\ad}} H_1(\partial X)^* \xrightarrow{\mathrm{ev}^{-1}} H^1(X)\xrightarrow{PD} H_2(\partial X)\xrightarrow{i_*} H_2(X)
\end{equation}
where the map $\partial$ denotes the connecting homomorphism in the long exact sequence of the pair and $i\colon \partial X\to X$ denotes the inclusion.  We have the following sequence, due to Saeki.

\begin{prop}[{\cite[Prop.~4.2]{saeki_2006}},{\cite[Thm.~7.13]{orson_powell_2023}}]
    Let $X$ be a compact, simply-connected, oriented, topological $4$-manifold with connected boundary $\partial X$.  Then the following is a short exact sequence:
    \[
    0\to \Lambda^2H_1(\partial X)^* \xrightarrow{} \mathcal{V}(H_2(X),\lambda_X)\xrightarrow{} \Aut(H_2(X),\lambda_X)\to 0.
    \]
\end{prop}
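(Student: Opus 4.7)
The plan is to establish short exactness by treating the two maps separately. Write $\alpha\colon \Lambda^2 H_1(\partial X)^* \to \mathcal{V}(H_2(X),\lambda_X)$ for $\eta \mapsto \Delta_\eta$ as in \ref{eq:pairing_to_variation}, and $\beta\colon \mathcal{V}(H_2(X),\lambda_X) \to \Aut(H_2(X),\lambda_X)$ for the map $\Delta \mapsto \Id - \Delta\circ q$ of \ref{eq:induced_auto_factorisation}. The three tasks are: (a) verify that $\alpha$ is a well-defined injective group homomorphism with $\Image(\alpha) \subseteq \ker(\beta)$; (b) show that $\beta$ is surjective; and (c) show that $\ker(\beta) \subseteq \Image(\alpha)$.

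For (a), the composition defining $\Delta_\eta$ factors through the connecting homomorphism $\partial\colon H_2(X,\partial X) \to H_1(\partial X)$, and $\partial \circ q = 0$ by exactness of the long exact sequence of $(X,\partial X)$, so $\Delta_\eta \circ q = 0$ and hence $\beta(\Delta_\eta) = \Id$. To see $\Delta_\eta \in \mathcal{V}$ I would verify the Poincar\'{e} variation identity $\Delta_\eta + \Delta_\eta^! = \Delta_\eta \circ j_* \circ \Delta_\eta^!$: since $\Delta_\eta \circ j_* = 0$ the right-hand side vanishes, and unwinding the duality isomorphisms in the definition of $\Delta^!$ and using Poincar\'{e} duality on the closed $3$-manifold $\partial X$ reduces $\Delta_\eta^! = -\Delta_\eta$ to skew-symmetry of $\eta$. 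For injectivity, if $\Delta_\eta = 0$ then surjectivity of $\partial$ gives $i_* \circ \PD \circ \ev^{-1} \circ \eta^{\ad} = 0$; since $i_*\colon H_2(\partial X) \to H_2(X)$ is injective (as $H_3(X,\partial X) \cong H^1(X) = 0$ for simply-connected $X$) and $\PD$, $\ev$ are isomorphisms on the closed $3$-manifold $\partial X$, we conclude $\eta = 0$.

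For (b), by the factorisation \ref{eq:induced_auto_factorisation} it suffices to realise any given isometry of $(H_2(X),\lambda_X)$ by some $f \in \Homeo^+(X,\partial X)$. One natural approach is to glue $X$ to a compact simply-connected $4$-manifold $X'$ with boundary $-\partial X$ to form a closed simply-connected $4$-manifold $Y$, extend the isometry by the identity on $H_2(X')$ to an isometry of $H_2(Y)$, realise it by a self-homeomorphism of $Y$ via Freedman's realisation theorem, and then use a collar isotopy to arrange that this homeomorphism is supported in the $X$-summand and restricts to the identity on $\partial X$.

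For (c), given $\Delta \in \ker(\beta)$, so $\Delta \circ q = 0$, the exact sequence $H_2(X) \xrightarrow{q} H_2(X,\partial X) \xrightarrow{\partial} H_1(\partial X) \to 0$ lets $\Delta$ descend to $\bar{\Delta}\colon H_1(\partial X) \to H_2(X)$. The Poincar\'{e} variation identity now forces $\Delta^! = -\Delta$, and unwinding this through the defining composition of $\Delta^!$ forces $\Image(\bar{\Delta}) \subseteq \Image(i_*)$, so $\bar{\Delta} = i_* \circ \widetilde{\Delta}$ for a unique $\widetilde{\Delta}\colon H_1(\partial X) \to H_2(\partial X)$. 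Applying Poincar\'{e} duality on $\partial X$ produces a bilinear pairing $\eta$ on $H_1(\partial X)$ satisfying $\Delta_\eta = \Delta$, and the identity $\Delta^! = -\Delta$ translates back precisely to skew-symmetry of $\eta$. The main obstacle I anticipate is step (c): accurately tracing the duality identifications so that skew-symmetry (rather than symmetry) of $\eta$ emerges with the correct sign, and rigorously deducing the factorisation through $i_*$ from the precise form of the variation identity.
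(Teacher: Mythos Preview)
The paper does not prove this proposition; it is quoted from \cite[Prop.~4.2]{saeki_2006} and \cite[Thm.~7.13]{orson_powell_2023} without argument, so there is nothing in the paper to compare your outline against.

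On the merits: steps (a) and (c) are essentially sound, and the sign issue you anticipate in (c) is routine bookkeeping (to obtain $\Image(\bar\Delta)\subseteq\ker q$, pair the relation $\Delta^!=-\Delta$ against classes in $\Image q$ and use $\Delta\circ q=0$). The genuine gap is step (b). Your cap--Freedman--restrict strategy fails at two points: when $H_*(\partial X)\neq 0$ the group $H_2(X\cup_{\partial X} X')$ is not $H_2(X)\oplus H_2(X')$, so ``extend the isometry by the identity on $H_2(X')$'' is not even well defined, let alone an isometry; and there is no ``collar isotopy'' forcing a homeomorphism of the closed manifold to be supported in $X$ and to fix $\partial X$ pointwise---producing such a rel-boundary homeomorphism is essentially the hard content of Orson--Powell's Theorem~A, not a corollary of Freedman's realisation theorem. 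More basically, surjectivity of $\beta$ onto the naive isometry group is simply false: for the manifolds $X_n$ of Section~4 one has $\lambda_{X_n}=0$, hence $q=0$ and $\beta(\Delta)=\Id$ for every variation $\Delta$, whereas the naive $\Aut(\Z^2,0)$ is $GL_2(\Z)$. In the cited sources the target $\Aut(H_2(X),\lambda_X)$ carries additional boundary structure (equivalently, one restricts to isometries fixing the radical of $\lambda_X$ pointwise), and a correct proof of (b) must be algebraic and formulated with that refined target.
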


So it follows that, in the connected boundary case, we have that the variations which induce the trivial map on homology are in one-to-one correspondence with skew-symmetric forms on $H_1(\partial X)$.

We have the following classification of the Torelli group, due to Orson-Powell.

\begin{thm}[{\cite[Cor. D]{orson_powell_2023}}]\label{thm:orson_powell_variations}
    Let $X$ be a compact, simply-connected, oriented $4$-manifold with connected boundary $\partial X$.  Then there is an isomorphism of groups:
    \begin{align*}
        \Tor(X,\partial X)&\xrightarrow{\cong} \Lambda^2H_1(\partial X)^*, \\
        [f] &\mapsto \eta_{\Delta_f}.
    \end{align*}
\end{thm}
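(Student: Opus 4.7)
The plan is to deduce this corollary formally from two ingredients: the short exact sequence of the preceding proposition, and \cite[Thm.~A]{orson_powell_2023}, the converse to \Cref{lem:homeo_variations_poincare_variations}, which asserts that the variation map
\[
\pi_0\Homeo^+(X,\partial X) \to \mathcal{V}(H_2(X),\lambda_X), \quad [f] \mapsto \Delta_f,
\]
is a bijection. That it is in fact a group isomorphism follows because Saeki's composition law from \Cref{lem:variation_group} is designed to model composition of homeomorphisms.

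Granting this, the argument proceeds as a diagram chase. First I would observe that the composition in \ref{eq:induced_auto_factorisation} sending a homeomorphism to its induced isometry of $(H_2(X),\lambda_X)$ is, by the very definition of the Torelli group, the homomorphism whose kernel is $\Tor(X,\partial X)$. Using the isomorphism $\pi_0\Homeo^+(X,\partial X) \cong \mathcal{V}(H_2(X),\lambda_X)$ to rewrite this kernel, the Torelli group is identified with the kernel of the second arrow $\mathcal{V}(H_2(X),\lambda_X) \to \Aut(H_2(X),\lambda_X)$, which by exactness of the short exact sequence in the preceding proposition is $\Lambda^2H_1(\partial X)^*$.

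To pin down the explicit formula $[f] \mapsto \eta_{\Delta_f}$, I would then check that the inclusion $\Lambda^2H_1(\partial X)^* \hookrightarrow \mathcal{V}(H_2(X),\lambda_X)$ from the exact sequence is realised by the construction $\eta \mapsto \Delta_\eta$ of \ref{eq:pairing_to_variation}; its one-sided inverse on the image therefore sends $\Delta$ back to the skew-symmetric form $\eta_\Delta$ defined by the unique lift to $H_2(\partial X)$ described just before the proposition. Composing with $[f] \leftrightarrow \Delta_f$ recovers the stated formula.

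The main obstacle is entirely external and lies in Orson--Powell's Theorem~A: this is the deep statement whose proof requires substantial topological input (notably disc embedding technology) to promote Saeki's surjectivity into a full classification. Once that realisation-and-uniqueness result is granted, the corollary is a formal consequence of the short exact sequence and the definitions of $\Delta_f$ and $\eta_\Delta$.
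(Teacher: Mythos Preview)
Your argument is correct, but note that the paper does not actually prove this statement: it is quoted verbatim as \cite[Cor.~D]{orson_powell_2023} and used as a black box. Your sketch---identify $\pi_0\Homeo^+(X,\partial X)$ with $\mathcal{V}(H_2(X),\lambda_X)$ via Theorem~A of Orson--Powell, then read off the kernel of $\mathcal{V}(H_2(X),\lambda_X)\to\Aut(H_2(X),\lambda_X)$ from Saeki's short exact sequence---is exactly how the corollary is derived in the cited reference, so there is nothing to compare against in the present paper.
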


\begin{remark}\label{rem:minimality}
    It follows immediately from this that $\Tor(X,\partial X)$ is non-trivial if and only if $b_1(\partial X) \geq 2$.  In fact, since $X$ is simply-connected, we can say more.  We claim that 
      $b_2(X)\geq b_1(\partial X)$ with equality holding if and only if $X$ has vanishing intersection form.  To see this, consider the exact sequence
    \[
    0\to H_2(\partial X;\Q) \to H_2(X;\Q) \xrightarrow{\lambda^{\ad}}H_2(X;\Q)^* \to H_1(\partial X;\Q) \to H_1(X;\Q)=0,
    \]
    where $\lambda^{\ad}$ is the adjoint of the intersection form and 
    the penultimate map is the composition of the inverse of the evaluation map, Poincar\'e duality and the connecting morphism of the long exact sequence of the pair.
    Now the claim is implied by the surjectivity of the penultimate map and the exactness of the sequence in $H_2(X;\Q)^*$.
    It follows that examples of $4$-manifolds with non-trivial $\Tor(X,\partial X)$  must have $b_2(X)\geq 2$.
\end{remark}

\subsection{Applying variations to closed manifolds}\label{sbs:variations_closed_manifolds}

Let $W$ be a simply-connected, oriented $4$-manifold with connected boundary $\partial W$.  In \Cref{sec:proof_of_existence_condition} we will want to use variations to prove that a homeomorphism $f\colon (W,\partial W)\to (W,\partial W)$ is non-smoothable.  In doing so, we will need the following lemma.

\begin{prop}\label{lem:gluing_lemma} 
    Let $W_1$ be a simply-connected, oriented $4$-manifold with connected boundary $\partial W_1\cong Y$, $W_2$ be an oriented $4$-manifold with  boundary $\partial W_2\cong -Y$ with $H^1(W_2;\Z)=0$ and let $X:= W_1\cup_{Y} W_2$ be the closed, oriented union.  Let $\eta\colon H_1(\partial W_1)\times H_1(\partial W_1)\to \Z$ be a skew-symmetric pairing, denote by $\Delta_\eta$ the induced variation $($given by \Cref{eq:pairing_to_variation}$)$ and denote by $\varphi_\eta\colon W_1\to W_1$ the induced homeomorphism $($given by \Cref{thm:orson_powell_variations}$)$.  Consider the umkehr map to the inclusion $i_1\colon W_1\to X$,
    \[
    i_1^!\colon H_2(X) \xrightarrow{\PD^{-1}} H^2(X) \xrightarrow{i_1^*} H^2(W_1) \xrightarrow{\PD} H_2(W_1, Y).
    \]Then for any class $x\in H_2(X)$ we have that
    \begin{equation}\label{eq:gluing_lemma}
        (\varphi_\eta \cup \Id_{W_2})_*(x) = x - (i_1)_*\Delta_\eta(i_1^!(x))\in H_2(X),
    \end{equation}
    where $\varphi_\eta \cup \Id_{W_2}\colon X\to X$ is the homeomorphism defined as $\varphi_\eta$ on $W_1$ and as $\Id_{W_2}$ on $W_2$.
\end{prop}

\begin{proof} 
    Let $\Sigma\subset X$ be an embedded, closed, oriented surface representing $x$, transverse to $Y$ (for topological transversality see \cite[Thm.~9.5A]{freedman_quinn_1990}).  
    
   The statement $i_1^!(x) = [\Sigma\cap W_1] \in H_2(W_1,Y)$ is equivalent to the commutativity of the following diagram:
    \[
    \begin{tikzcd}
        H_2(X) \arrow[r,"q_*"] & H_2(X,W_2) & H_2(W_1,Y) \arrow[l,"\cong"'] \\
        H^2(X) \arrow[r,"i_1^*"] \arrow[u,"\PD", "\cong"'] & H^2(W_1) \arrow[ru, "\PD"', "\cong"] & 
    \end{tikzcd}
    \]
    where $q_*$ is the map induced by the inclusion $q\colon (X,\emptyset)\to (X,W_2)$
    which sends $[\Sigma]$ to $[\Sigma\cap W_1]$  and the written isomorphism $H_2(W_1,Y)\to H_2(X,W_2)$ comes from excision.   
    
    We now prove the commutativity of the diagram.  Let $y\in H^2(X)$.  Going first to the right, this maps to $i_1^*(y)\frown [W_1,Y]\in H_2(W_1,Y)$ and then to $y\frown (i_1)_*[W_1,Y]\in H_2(X,W_2)$ by naturality of the (relative) cap product.  Going the other way, $y\in H^2(X)$ is mapped to $q_*(y\frown [X])=y\frown q_*[X]\in H_2(X,W_2)$, again by naturality of the (relative) cap product.  It remains to be shown that these are equal, i.e.\ that $q_*[X]=(i_1)_*[W_1,Y]$.  By considering the long exact sequence of the pair $(X,W_2)$ one sees that the map $q_*\colon H_4(X)\to H_4(X,W_2)$ is an isomorphism, which means that $q_*[X]$ is a generator of $H_4(X,W_2)$ (to see this one needs to use the fact that $H_3(W_2)=0$, which follows from Poincar\'{e} duality and universal coefficients theorem, using the fact that $H^1(W_2;\Z)=0$ and that $W_2$ has connected boundary).  Similarly, by excision, $(i_1)_*\colon H_4(W_1,Y)\to H_4(X,W_2)$ is an isomorphism, so it follows that $(i_1)_*[W_1,Y]$ is a generator of $H_4(X,W_2)$.  The fact that they are the same generator follows from the orientations on $X, W_1$ and $W_2$ having been chosen to be compatible.  This completes the proof that the diagram commutes.
    
    As a singular $2$-chain, $\Sigma = (\Sigma\cap W_1) + (\Sigma\cap W_2) \in C_2(X)$. Similarly, 
    the singular $2$-chain induced by $(\varphi_\eta \cup \Id_{W_2})(\Sigma)$, i.e.\ the left hand side of \ref{eq:gluing_lemma}, is equal to the sum  $\varphi_\eta(\Sigma\cap W_1) + (\Sigma\cap W_2)$ in $C_2(X)$. Hence we have that:
    \begin{equation*}
        \Sigma - (\varphi_\eta \cup \Id_{W_2})(\Sigma)  =  (\Sigma\cap W_1)- \varphi_\eta(\Sigma\cap W_1)  \in C_2(X).
    \end{equation*}
    The right hand side is homologous to the cycle induced by the glued-up surface $(\Sigma\cap W_1)\cup -{\varphi_\eta(\Sigma \cap W_1)}$ which is equal to $(i_1)_*\Delta_\eta ([\Sigma \cap W_1])$ by \Cref{def:homeo_to_variation}.
\end{proof}

%%%%%%%%%%%%%%%%%%%%%%%%%%%%%%%%%%%%%%%%%%%%%%%%%%%%%%%%%%%%%%%%%%%%%%%%%%%%%%
%%%%%%%%%%%%%%%%%%%%%%%%%%%%%%%%%%%%%%%%%%%%%%%%%%%%%%%%%%%%%%%%%%%%%%%%%%%%%%
\section{Sufficient conditions for non-smoothability}\label{sec:proof_of_existence_condition}

\newcommand{\cI}{\mathcal{I}}
\newcommand{\cB}{\mathcal{B}}
\newcommand{\sstruc}{\mathfrak{s}}
In this section, all manifolds will be considered to be smooth.
For any closed, oriented, $4$-manifold  $X$, we will denote by $\Spinc(X)$ the set of isomorphism classes of \spinc-structures on $X$, and by  $\cI(X,\cdot)\colon \Spinc(X)\to \mathcal{Y}$ a map taking values in an abelian group $\mathcal{Y}$ such that the action of  $ \mathrm{Diffeo}^+(X)$ on $H_2(X)$ by pull-back preserves the set of $\cI$-basic classes, defined as
\begin{equation*}
    \cB_\cI(X) := \{c_1(\sstruc)\in H^2(X) \ | \  \cI(X, \sstruc)\neq 0, \sstruc \in \Spinc(X)\},
\end{equation*}
and moreover this  set is \emph{finite}.
For example, if  $b_2^+(X)\geq 2$, and $\sstruc \in \Spinc(X)$, $\cI(X,\sstruc)$ may be taken to be the Seiberg-Witten invariant $SW(X,\sstruc)$ \cite{witten_1994}, the Ozsv\'ath-Szab\'o mixed invariant $\Phi_{X,\sstruc}$ \cite{OzsvathSzaboHolomorphicTriangles},
or the Bauer-Furuta invariant $BF_{X,\sstruc}$ \cite{BauerFuruta}.
The finiteness of $BF$-basic classes is not stated explicitly in \cite{BauerFuruta}, but can be proved using curvature inequalities as observed in the proof of \cite[Thm. 4.5]{ManolescuMarengonPiccirillo}.

We now prove the main proposition that we will use to detect non-smoothability for homeomorphisms in the Torelli group.

\begin{prop}\label{Lemma:KeyLemma} Let $W^4$ be a compact, oriented $4$-manifold with connected boundary $Y$.
Suppose that $\pi_1(W) = 1$ and that $b_1(Y)\geq 2$.
If  $W$ embeds in a closed, oriented $4$-manifold $X$ such that 
for some  $\sstruc\in \Spinc(X)$, 
    \begin{enumerate}
        \item $\cI(X,\sstruc)\neq 0$,
        \item $i^*_{Y,X}(c_1(\sstruc))\in H^2(Y)$ is non-torsion where $i_{Y,X}:Y\hookrightarrow X$  is the inclusion,
        \item $H^1(X\setminus W) = 0$,
    \end{enumerate} then there exists infinitely many non-smoothable 
mapping classes in $\Tor(W,Y)$. If in addition $b_1(Y) = 2$  then any non-trivial element of $\Tor(W,Y)$ is non-smoothable.
\end{prop}

\begin{proof} Let $\sstruc$ be a Spin$^c$ structure on $X$ satisfying the hypothesis of the theorem. To avoid clutter, it is convenient to denote $\zeta_X := c_1(\sstruc)$ and its restrictions by $\zeta_W := i^*_{W,X}c_1(\sstruc)$ and $\zeta_Y := i^*_{Y,X}c_1(\sstruc)$.

Since $\zeta_Y\in H^2(Y;\Z)$ is not torsion $\PD(\zeta_Y) = dv_1$, for some $d\in \Z\setminus \{0\}$ and an indivisible element $v_1\in H_1(Y;\Z)$.
    Extend $v_1$ to $v_1,\dots, v_{b_1(Y)} \in H_1(Y)$, a lift of a basis of $H_1(Y)/\mathrm{Torsion}(H_1(Y))$.
    Now we set $\eta := v_1^*\wedge v_2^* \in \Lambda^2 (H_1(Y)^*)$  where $v_i^*$ denotes the Hom dual with respect to the above basis  (note that $v_2\neq 0$ exists since we assumed $b_1(Y)\geq 2$).
    
    By \Cref{thm:orson_powell_variations}, for each $k\in \Z\setminus \{0\}$, there is a unique mapping class in  $\Tor(W,Y)$ associated to $k\eta$, and we define $\varphi_k \in \Homeo^+(W,Y)$ to be an arbitrary representative of that class.
    By construction, each $\varphi_k$ acts trivially on $H_2(W)$. The rest of the proof is devoted to showing that $\varphi_k$ is non-smoothable for infinitely many values of $k$.
    
    For each $k$,  we define $ \hat \varphi_k:= \varphi_k\cup\Id_{X\sm \mathrm{int}(W)} \in \Homeo^+(X)$ as in \Cref{lem:gluing_lemma} to be the homeomorphism obtained by extending $\varphi_k$
    as the identity on $X\sm W$.
    The non-smoothability of $\hat \varphi_k$ for infinitely many $k$, which we are now going to prove, implies the analogous statement 
    for~$\varphi_k$.

    We will show  that $\{{(\hat \varphi_k)}_* \PD(\zeta_X)\}_{k\in \Z\setminus \{0\}}$ is infinite, which is equivalent to $\{\hat \varphi_k^* \zeta_X\}_{k\in \Z\setminus \{0\}}$ being infinite. Since $\cB_\cI(X)$ is finite 
    and is preserved by the action of $\Diff^+(X)$, this will imply  that  $\hat \varphi_k^*$ is non-smoothable for infinitely many $k$.

    It follows from  \Cref{lem:gluing_lemma} that
    \begin{equation}\label{eq:eq1}
        (\hat \varphi_k)_* (\PD (\zeta_X)) = \PD (\zeta_X) - (i_{W,X})_* \circ \Delta_{\varphi_k}(\PD (\zeta_W)) \in H_2(X).
    \end{equation}

    From \eqref{eq:pairing_to_variation} we see that 
    \begin{equation*}
       (i_{W,X})_*\circ \Delta_{\varphi_k}(\PD(\zeta_W)) = k \cdot  (i_{Y,X})_*\circ \PD \circ \mathrm{ev}^{-1} \circ \eta^{ad}\circ \partial \circ \PD (\zeta_W) \in H_2(X).
    \end{equation*}
    We claim that the right hand side is equal to a non-torsion element times
    $k$. This will imply the desired result by \eqref{eq:eq1}. 
    We have that 
    \begin{equation*}
    \begin{split}
         (i_{W,X})_*\circ \Delta_{\varphi_k}(\PD(\zeta_W)) &= k \cdot  (i_{Y,X})_*\circ \PD \circ \mathrm{ev}^{-1} \circ \eta^{ad}\circ \partial \circ \PD (\zeta_W)\\
       %\{\partial \circ \PD (\xi_W) = \PD (\xi_Y)\}  
       &= k \cdot  (i_{Y,X})_*\circ \PD \circ \mathrm{ev}^{-1} \circ \eta^{ad}\circ\PD (\zeta_Y)\\
       %\{\PD(\xi_Y) = dv_1\}
       & = k \cdot  (i_{Y,X})_*\circ \PD \circ \mathrm{ev}^{-1} \circ \eta^{ad}(d v_1)\\
       %\{\eta=v_1^*\wedge v_2^*\}  
       &= k d \cdot  (i_{Y,X})_*\circ \PD \circ \mathrm{ev}^{-1} (v_2^*).
    \end{split}
    \end{equation*}
        
    % Now, $\partial \circ \PD (\xi_W) = \PD (\xi_Y)\in H_2(Y)$ thus
    % \begin{equation*}
    %     (i_{W,X})_* \circ \Delta_{\varphi_k}(\PD(\xi_W)) = d k\cdot  (i_{Y,X})_*\circ \PD\circ \mathrm{ev}^{-1}(v_2^*) \in H_2(X),
    % \end{equation*}
    % where we used that  $ \eta^{ad}(d \ v_1)) = d v_2^*$ by definition of $\eta$.
    
    Since $v_2$ is non-torsion and the maps $\mathrm{ev}^{-1}$ and $\PD$ are isomorphisms, the claim will follow if we prove that $(i_{Y,X})_*\colon H_2(Y)\to H_2(X)$ is injective.
    Now $(i_{Y,X})_* =(i_{W,X})_*\circ (i_{Y,W})_* $.
    The kernel of $(i_{Y,W})_*$ is equal to the  image of $H_3(W,Y)\to H_2(Y)$ in the long exact sequence of the pair, which is trivial since $H_3(W,Y)\cong H^1(W) = 0$.
    Similarly the kernel of $(i_{W,X})_*$ is equal to the  image of $H_3(X,W)\to H_2(W)$ which is zero because $H_3(X,W)\cong H_3(X\setminus \mathrm{int}(W), Y)$ by excision and by assumption $0= H^1(X\setminus W) \cong H_3(X\setminus \mathrm{int}(W), Y)$. Being the composition of injective maps, $i_{Y,W}$ is injective.  This completes the proof that there are infinitely many non-smoothable mapping classes in $\Tor(W,Y)$.

    To prove the last statement we assume now that $b_1(Y) = 2$.
    Then, under the isomorphism from \Cref{thm:orson_powell_variations}, we can identify $\Tor(W,Y)$ with the infinite cyclic group generated by~$\eta$. 
 Above we showed that there exists $k_0>0$ such that  $\varphi_{k\eta}$
    is non-smoothable for any~$|k|>~k_0$. The non-smoothability of $\Tor(X,Y)\setminus \{\Id_X\}$ follows from this by using the
    fact that smoothable mapping classes form a subgroup of $\Tor(X,Y)$ and that all non-trivial
    subgroups of $\Z$ are infinite.
\end{proof}

\Cref{Lemma:KeyLemma} has an immediate application to symplectic fillings.  For the reader's convenience we recall that a strong symplectic filling of a contact $3$-manifold $(Y,\xi)$ is a compact, symplectic  $4$-manifold $(W,\omega)$ with oriented boundary $Y$
such that such that there exists a Liouville vector field $V$ defined in a neighbourhood of $\partial W$ pointing outwards along $Y$  and such that  the pull-back of the $1$-form $\omega(V,\cdot)$ to $Y$  induces the contact structure $\xi$ on $Y$. The interested reader is referred to \cite{geiges_book, ozbagci_stipsicz_boook}.

\begin{cor}\label{cor:existence_condition}
    Let $(W,\omega)$ be a strong symplectic filling of  $(Y,\xi)$. Further suppose that $W$ is simply-connected, $b_1(Y)\geq 2$ and that  $c_1(\xi)\in H^2(Y)$  is not torsion. Then the same conclusions of \Cref{Lemma:KeyLemma} hold.
\end{cor}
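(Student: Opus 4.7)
The plan is to reduce to \Cref{Lemma:KeyLemma} by embedding $(W,\omega)$ symplectically into a closed symplectic $4$-manifold $(X,\omega_X)$ and invoking Taubes' non-vanishing theorem for the canonical spin-c structure of $(X,\omega_X)$.

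To implement this, I would first apply the Eliashberg--Etnyre symplectic capping theorem to produce some closed symplectic manifold $(X,\omega_X)$ containing $(W,\omega)$ as a symplectic subdomain, with $X\sm W$ a symplectic cap. I would then arrange that $(X,\omega_X)$ additionally satisfies $b^+(X)\geq 2$ (so that the Seiberg--Witten invariants of $X$ are well defined and chamber-independent) and $H^1(X\sm W)=0$ (which is hypothesis (3) of \Cref{Lemma:KeyLemma}). The former can be secured by symplectically fiber-summing the initial cap with an auxiliary closed symplectic manifold of large $b^+$, e.g. $E(n)$ for $n\geq 2$, along a symplectic torus lying entirely in the cap region; the latter by selecting a cap whose total space is simply-connected, for instance one coming from a Lefschetz fibration over a disk with appropriately chosen monodromy, or by symplectically attaching $2$-handles along loops in $X\sm W$ representing generators of $H_1(X\sm W)$.

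Once such an $(X,\omega_X)$ is in place, I would take $\mathfrak{s}:=\mathfrak{s}_{\omega_X}$ to be the canonical spin-c structure of $(X,\omega_X)$. By Taubes' theorem, $SW(X,\mathfrak{s})=\pm 1$, verifying hypothesis (1) of \Cref{Lemma:KeyLemma}. Moreover, the canonical spin-c structure of a strong symplectic filling restricts on the convex boundary to the canonical spin-c structure $\mathfrak{s}_\xi$ determined by the contact structure, and $c_1(\mathfrak{s}_\xi)=c_1(\xi)$; hence $i_{Y,X}^*c_1(\mathfrak{s})=c_1(\xi)\in H^2(Y)$, which is non-torsion by hypothesis, giving (2). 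Combined with the standing assumptions that $W$ is simply-connected and $b_1(Y)\geq 2$, all the hypotheses of \Cref{Lemma:KeyLemma} are in place, and the corollary follows.

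The principal obstacle is the topological control on the cap in the first step: producing a closed symplectic extension with $b^+(X)\geq 2$ and with cap of vanishing first cohomology is not immediately delivered by the standard capping constructions, and requires an explicit modification argument or a careful choice of an open book / Lefschetz-fibration presentation of $(Y,\xi)$. Once this topological fine-tuning has been carried out, the remainder of the argument reduces to bookkeeping of Chern classes on the boundary and a direct appeal to \Cref{Lemma:KeyLemma}.
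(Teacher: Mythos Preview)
Your proposal is correct and follows essentially the same route as the paper: symplectically cap $(W,\omega)$ to a closed $(X,\omega_X)$ with $b^+(X)\geq 2$ and trivial $H^1$ on the cap, invoke Taubes for the canonical \spinc-structure, and observe that $c_1(\mathfrak{s}_{\omega_X})$ restricts to $c_1(\xi)$ on $Y$, so that \Cref{Lemma:KeyLemma} applies. The ``principal obstacle'' you flag is handled in the paper by citing \cite[Sec.~6]{etnyre_min_mukherjee_2022}, which directly provides a simply-connected symplectic cap with $b^+$ as large as desired; this is stronger than the $H^1=0$ you need and removes the need for the ad hoc modifications you sketch.
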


\begin{proof} It is possible to embed $(W,\omega)$ symplectically 
into a closed symplectic $4$-manifold $(X,\omega_X)$ \cite{eliashberg_2004,EtnyreSymplecticFillings} (see also \cite{LiscaMatic} for the Stein case).  Furthermore, we can arrange 
that $\pi_1(X\setminus W) = 1$ and that $b_2^+(X)\geq 2$  \cite[Sec. 6]{etnyre_min_mukherjee_2022} (note that the symplectic cap called $X$ in \cite[Sec. 6]{etnyre_min_mukherjee_2022} plays the role of $X\setminus \mathrm{int}(W)$ in our proof).
    Being symplectic, it follows from Taubes' work \cite{taubes_1994} that $c_1(X)$ is a Seiberg-Witten basic class.
    Because the embedding is symplectic, we have that $i_{Y,X}^*c_1(X) = i_{Y,W}^*\circ i_{W,X}^*c_1(X) = c_1(\xi)$, which is non-torsion by assumption.
    Now apply \Cref{Lemma:KeyLemma}.\end{proof}

%%%%%%%%%%%%%%%%%%%%%%%%%%%%%%%%%%%%%%%%%%%%%%%%%%%%%%%%%%%%%%%%%%%%%%%%%%%%%%
%%%%%%%%%%%%%%%%%%%%%%%%%%%%%%%%%%%%%%%%%%%%%%%%%%%%%%%%%%%%%%%%%%%%%%%%%%%%%%
\section{Constructing examples}\label{sec:examples}
    In this section we will construct two infinite families of $4$-manifolds, each supporting an infinite family of non-smoothable mapping classes in their Torelli groups.

    \subsection{Family from Legendrian surgery}
    
    Stein domains (see \cite[Sec. 11.2]{gompf_stipsicz_1999} for an introduction) are a particular case of symplectic manifolds which are also strong symplectic fillings of their boundary \cite[Prop. 5.4.9]{geiges_book}. A $4$-manifold can be given the structure of a Stein domain if and only if it can be described by a special type of Kirby diagram
    \cite{Gompf_HandlebodyConstructionStein}:
     a \emph{Legendrian link diagram in standard form} \cite[Def. 11.1.7]{gompf_stipsicz_1999} where the framing coefficient on each link component $K$ is equal to $\tb(K)-1$, where $\tb$ denotes the Thurston-Bennequin invariant.
%%%%%%%%%%%%%%%%
    Given such a diagram, we can easily compute 
    the first Chern class of the Stein domain as follows \cite[Prop. 2.3]{Gompf_HandlebodyConstructionStein}:
    \begin{equation}\label{eq:ChernClassContactStructure}
        c_1(W) = \left[\sum_{i=1}^N \mathrm{rot}(K_i) {h_{K_i}^*}\right] \in H^2(W),
    \end{equation}
    where $W$ is the Stein domain specified by the diagram, $K_1,\dots, K_N$ 
    are the \emph{oriented} Legendrian components in the diagram, $h_{K_i}^*\in C^2(W)$
    denotes the cochain associated to the $2$-handle attached along $K_i$    and $\mathrm{rot}(K_i)$ is the rotation number of the component $K_i$.
%%%%%%%%%%%%%%%%
    
    We are now ready to define our first  family of manifolds.
    For any $n\in \N$, we define  $X_n$ to be the Stein domain specified by the two component Legendrian link diagram in standard form in Figure~\ref{fig:X_n}.
    
    \begin{figure}
        \centering
        \begin{tikzpicture}[scale=0.5]   
            \node at (0,0) {\includegraphics[width=0.5\textwidth]{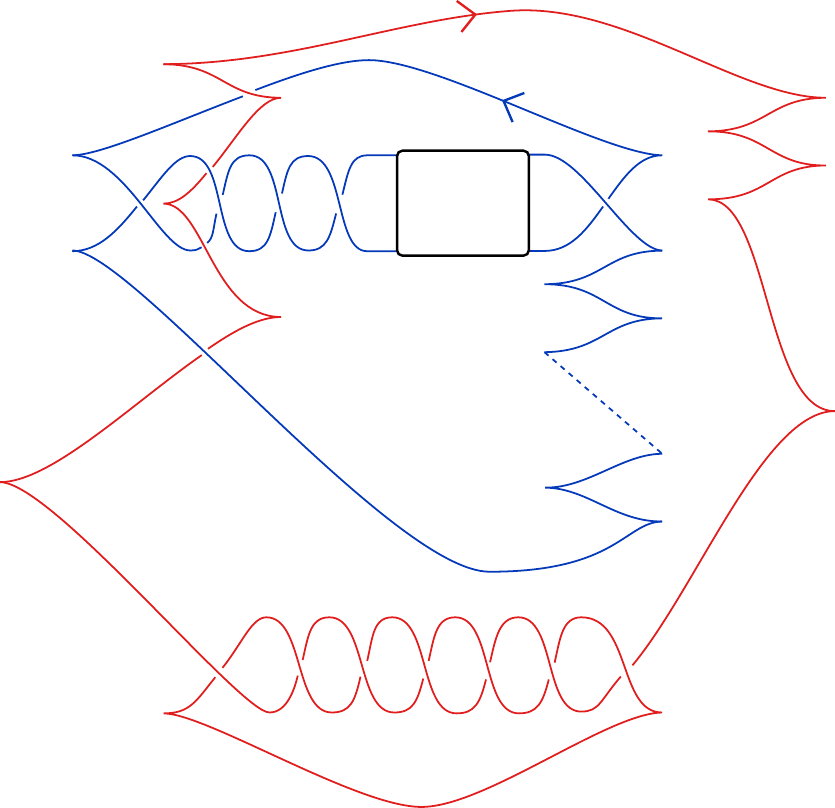}};
            %  \draw[step=1.0, black, thin] (-8,-8) grid (8,8);
            %  \foreach \x in {-8,-7,-6,-5,-4,-3,-2,-1,0,1,2,3,4,5,6,7,8}
            %      \node at (\x, -8.5) {$\x$};
            %  \foreach \y in {-8,-7,-6,-5,-4,-3,-2,-1,0,1,2,3,4,5,6,7,8}
            %      \node at (-8.5, \y) {$\y$};
            % %%% Now place labels
            \node at (-2.4,-1.5) {$K_{1,n}$};
            \node at (6,-3.3) {$K_{2,n}$};
            \node[scale=0.83] at (0.87,3.73) {$n-1$};
            \node[scale=0.83] at (-0.2,-0.29) {$2n-1$ cusps};
            \node[scale=1.6,yscale=2.1] at (2,-0.29) {\{};
            \node at (-5.5,5.35) {$0$};
            \node at (5.5,6.75) {$0$};
        \end{tikzpicture}
        \caption{A Legendrian link diagram in standard form for $X_n$. The knot~$K_{1,n}$ with the specified orientation has $2n+3$ crossings, $2n+2$ right cusps and rotation number~$2n$. The knot $K_{2,n}$ has rotation number $0$.}
        \label{fig:X_n}
    \end{figure}

    \begin{thm}\label{thm:FamilyX_n} For each $n\in \N$ the $4$-manifold $X_n$ is simply-connected, has $H_2(X)\cong \Z^2$ and  vanishing intersection form.
    Moreover, all the non-trivial elements of the topological Torelli group
    $ \Tor(X_n,\partial X_n)$ are  non-smoothable.
    Furthermore, define $n_r :=  2^{r+2}-3$ for $r\geq 1$. Then
    $\partial X_{n_r}$ is not diffeomorphic to $\partial X_{n_{m}}$ for any $r\neq m$.
    \end{thm}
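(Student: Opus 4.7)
The strategy is to verify the three hypotheses of \Cref{cor:existence_condition} for $(X_n,\omega_n)$ viewed as a Stein (hence strong symplectic) filling of its boundary contact manifold $(\partial X_n,\xi_n)$, and then separately handle the diffeomorphism type of the boundaries along the subsequence $n_r$.

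Since the Kirby diagram in \Cref{fig:X_n} has only a $0$-handle and two $2$-handles, $X_n$ is simply-connected and $H_2(X_n)\cong \Z^2$ is generated by the cores of the two $2$-handles. The intersection form is then the linking matrix
\[
\lambda_{X_n}=\begin{pmatrix}\tb(K_{1,n})-1 & \mathrm{lk}(K_{1,n},K_{2,n})\\ \mathrm{lk}(K_{1,n},K_{2,n}) & \tb(K_{2,n})-1\end{pmatrix},
\]
and inspection of the figure shows that both framings are $0$ (as annotated) and that the linking number vanishes, so $\lambda_{X_n}\equiv 0$. Plugging this into the exact sequence of \Cref{rem:minimality} yields $H_1(\partial X_n;\Z)\cong \Z^2$, so that $b_1(\partial X_n)=2$.

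Combining Gompf's formula~\eqref{eq:ChernClassContactStructure} with the rotation numbers $\mathrm{rot}(K_{1,n})=2n$ and $\mathrm{rot}(K_{2,n})=0$ gives
\[
c_1(X_n,J_n)=2n\cdot h^*_{K_{1,n}}\in H^2(X_n).
\]
Because $\lambda_{X_n}=0$, the quotient map $H_2(X_n;\Q)\to H_2(X_n,\partial X_n;\Q)$ vanishes; dualising via Poincar\'e--Lefschetz duality and using the cohomology long exact sequence of the pair then shows that the restriction $H^2(X_n;\Q)\to H^2(\partial X_n;\Q)$ is injective. Hence $c_1(\xi_n):=i^*_{\partial X_n,X_n}c_1(X_n,J_n)$ is non-torsion for every $n\geq 1$. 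All three hypotheses of \Cref{cor:existence_condition} are therefore verified, and since $b_1(\partial X_n)=2$ the stronger conclusion applies: every non-trivial element of $\Tor(X_n,\partial X_n)$---which is infinite cyclic by \Cref{thm:orson_powell_variations}---is non-smoothable.

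The remaining assertion that $\partial X_{n_r}$ and $\partial X_{n_m}$ are non-diffeomorphic for $r\neq m$ is the step I expect to require the most concrete work, since the computations above give $H_1(\partial X_n;\Z)\cong \Z^2$ for every $n$, so first homology cannot separate the boundaries. The plan is to identify $\partial X_n$ as a surgery on an explicit $2$-bridge configuration (with parameters depending on $n$) and then extract a finer diffeomorphism invariant whose value varies along the chosen subsequence. Natural candidates include the divisibility of $c_1(\xi_n)$ inside $H^2(\partial X_n;\Z)$ (already available from the computation above), Seifert or JSJ invariants of $\partial X_n$, or a Heegaard Floer correction term $d(\partial X_n,\mathfrak{t})$ at a torsion spin$^c$ structure~$\mathfrak{t}$. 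The exponential choice $n_r=5\cdot 2^{r-1}-3$ is presumably dictated by the growth needed to make such an invariant pairwise distinct along the subsequence; pinning down an explicit invariant that witnesses this is the main obstacle.
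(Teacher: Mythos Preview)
Your argument for the first two claims---simple-connectivity, $H_2\cong\Z^2$ with trivial form, and non-smoothability of every non-trivial Torelli class via \Cref{cor:existence_condition}---is correct and matches the paper's proof essentially line for line.

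The gap is in the final assertion. You correctly diagnose that $H_1(\partial X_n)$ cannot distinguish the boundaries, but none of your proposed invariants is what the paper actually uses, and the one you call ``already available'' does not work as stated: the divisibility of $c_1(\xi_n)$ in $H^2(\partial X_n)$ is not a diffeomorphism invariant of the bare $3$-manifold $\partial X_n$, because a putative diffeomorphism $\partial X_{n_r}\cong\partial X_{n_m}$ has no reason to carry $\xi_{n_r}$ to $\xi_{n_m}$. Heegaard Floer correction terms or JSJ data might in principle work but would require substantial additional computation.

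The paper's actual mechanism is the adjunction inequality for Stein domains. Since $c_1(X_n)=2n\,x_{1,n}^*$, the Lisca--Mati\'c inequality gives, for any $A\in\GL(2,\Z)$, a lower bound $\max_i g_{X_n}(A\cdot e_i)\geq 1+2n$ on the minimal genus of surfaces in $X_n$ representing the transformed basis. On the other hand the paper exhibits explicit closed surfaces $\Sigma_{1,n},\Sigma_{2,n}\subset\partial X_n$ of genera $2n+3$ and $7$ representing the generators of $H_2(\partial X_n)\cong H_2(X_n)$. A diffeomorphism $f\colon\partial X_{n_r}\to\partial X_{n_m}$ would push these into $\partial X_{n_m}\subset X_{n_m}$, yielding the upper bound $2n_r+3\geq 1+2n_m$; the exponential subsequence $n_r=5\cdot 2^{r-1}-3$ is chosen precisely so that this inequality fails whenever $r<m$. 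This is the missing idea you need to supply.
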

    
    \begin{proof} The first part follows from the fact that $X_n$ is a link trace
    on a link with two components which are both $0$-framed and with vanishing linking number. From this it also follows that~$H_1(\partial X_n) \cong \Z^2$. 
    
    We want to invoke  \Cref{cor:existence_condition}.
    By construction $X_n$ is a Stein domain, hence it only remains to show
    that $c_1(\xi_n)\neq 0$, with $\xi_n$ being the contact structure induced on $\partial X_n$.
    
    Denote by $x_{1,n}, x_{2,n} \in H_2( X_n)$ the homology classes induced by the  $2$-handles attached along $K_{1,n}$ and $K_{2,n}$, respectively.
    
    By \eqref{eq:ChernClassContactStructure}, we have that:
    \begin{equation*}
        c_1(X_n) = \mathrm{rot}(K_{1,n})x_{1,n}^* + \mathrm{rot}(K_{2,n})x_{2,n}^* =2n x_{1,n}^*,     
    \end{equation*}
     with respect to the dual basis $x_{1,n}^*, x_{2,n}^*\in H^2(X_n)$.
    The first Chern class of the Stein structure  restricts to that of the contact structure on the boundary and the inclusion map $i_{\partial X_n}\colon\partial X_n\hookrightarrow X_n$ induces an isomorphism $H^2( X_n)\overset{\cong}{\to} H^2(\partial X_n)$, as can be seen from the long exact sequence of the pair and using the fact that the intersection form of $X_n$ is trivial and $H_1(X_n)= 0$.  It follows immediately that \Cref{cor:existence_condition} applies.

    It remains to show that $\partial X_{n_r}\neq \partial X_{n_m}$ for $r\neq m$.
    In the rest of the proof we will identify $H_2(X_n)\cong \Z^2$ via $x_{i,n}\mapsto e_i$, $i=1,2$, and then identify $H_2(\partial X_n) \cong \Z^2$ by composing the previous identification with the isomorphism $H_2(\partial X_n)\overset{\cong}{\to} H_2(X_n)$ given by the inclusion.
    
    For $A \in GL(\Z,2)$, we define 
    \begin{equation*}
        \genus_{X_n}(A) := \max\{g_{X_n}(A \cdot e_1),g_{X_n}(A\cdot e_2)\},
    \end{equation*}
    where $g_{X_n}: H_2(X_n)/\Torsion(H_2(X_n))\to\Z_{\geq 0} $ is the minimal genus function \cite[p. 37]{gompf_stipsicz_1999}.

    The adjunction inequality for Stein domains     \cite{LiscaMatic} (see also \cite[Prop.~9.2]{AkbulutBook} or \cite[Thm.~11.4.7]{gompf_stipsicz_1999}) gives:
    \begin{equation}\label{eq:genusLowerBound}
        \genus_{X_n}(A)\geq 1 + \frac 1 2 \max_{i=1,2}|\langle c_1(X), A \cdot e_i\rangle| 
            \geq 1 + \frac {2n} 2\max \{|A_{1 1}|,| A_{2 1}|\} \geq 1+n
    \end{equation}
    where we used that the intersection form of $X_n$ vanishes and, in the last inequality, we used that  $\det(A) \neq 0$.

    Now suppose for a contradiction that  $f\colon \partial X_{n_r} \to \partial X_{n_{m}}$ 
    is a diffeomorphism.  By swapping to $f^{-1}$ if necessary, we can suppose
    that $r<m$.
    
    Under the identifications introduced above, $f_*\colon H_2(\partial X_{n_r})\to H_2(\partial X_{n_m})$ induces an element~$A \in GL(\Z,2)$.
    
    We can find  closed, orientable surfaces  $\Sigma_{i,n_r}\subset \partial X_{n_r}$   representing $e_i\in H_2(\partial X_{n_r})$, for $i=1,2$   of genera $g(\Sigma_{1,n_r}) = 2n_r +3$ and $g(\Sigma_{2,n_r}) = 7$; see \Cref{fig:Surfaces}. Hence $f(\Sigma_{i,n_r})\subset \partial X_{n_m}$ provides us with the upper bound $\genus_{X_{n_m}}(A) \leq \max \{2n_r +3,7\}\leq 2n_r +3$.

    This together with \eqref{eq:genusLowerBound} gives
    \begin{equation*}
                 2n_r +3  \geq \genus_{X_{n_m}}(A) \geq 1+n_m\geq 1+n_{r+1},
    \end{equation*}
    where in the last inequality we used that $r<m$ and that $r\mapsto n_r$ is increasing with $r$. Now, it can be seen that our definition of $n_r$ satisfies the recursive equation $n_{r+1} = 2 n_{r}+3$, for any $r \in \N$. Thus the  inequality above gives the contradiction 
    \begin{equation*}
        2n_r +3 \geq 1+n_{r+1} = 1+2n_r +3.\qedhere
    \end{equation*}
    \end{proof}

    \begin{figure}
        \centering        
        \includegraphics[scale=0.8]{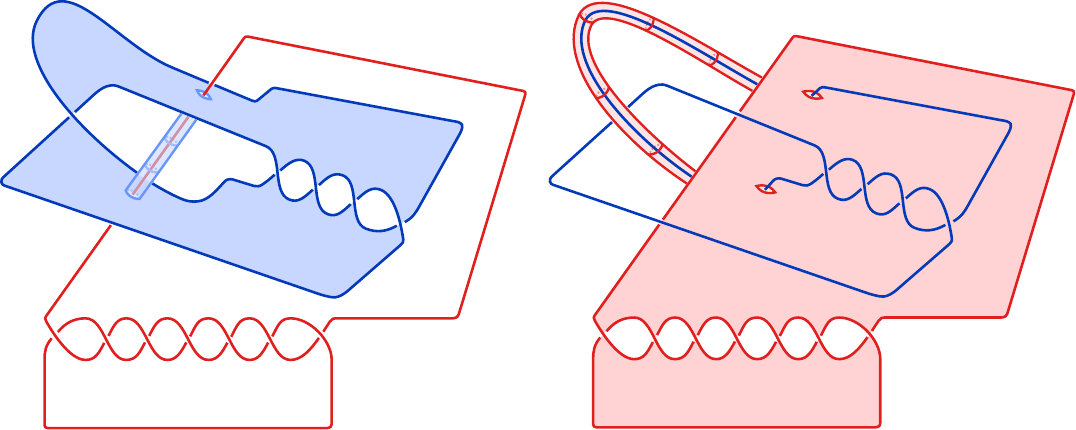}
        \caption{ The pictures show, in a surgery presentation for $\partial X_n$ (every link component is $0$-framed), the two surfaces
         $\Sigma_{1,n}$ (on the left) and  $\Sigma_{2,n}$ (on the right) for the case $n=1$. They have genera $g(\Sigma_{1,n})=2n+3$ and $g(\Sigma_{2,n})=7$.  }
        \label{fig:Surfaces}
    \end{figure}

    \begin{remark} By mimicking the construction of the manifolds $X_n$, it is not difficult to construct another family of manifolds $\{Q_n\}_{n\in \N}$ with $b_1(\partial Q_n)\to \infty$ as $n\to \infty$ and with $\Tor(Q_n, \partial Q_n)$ containing infinitely many non-smoothable mapping classes. 
    For example, one can define $Q_n$ recursively:  start with $Q_1=X_1$ and obtain $Q_{n+1}$ from $Q_n$ by attaching a $2$-handle along a knot $C_{n+1}$,
    where $C_1 = K_{2,1}$ and, for $n\geq 1 $, $C_{n+1}$ is a knot
     identical to $K_{2,1}$, but unlinked from all components besides $C_n$,
     and linking $C_{n}$ in the same way that $K_{2,1}$ links $K_{1,1}$.
Then $Q_n$ will have a Stein structure satisfying the hypothesis of \Cref{cor:existence_condition} and $b_1(\partial Q_n) = 1+n$.
    \end{remark}
        
    \subsection{Family from knot surgery}
    Now we will construct an infinite family of manifolds sharing the same boundary.
    It is possible to give a proof of \Cref{thm:FamilyZ_n} below by pairing \Cref{cor:existence_condition} together with well known compactification results for Stein domains \cite{eliashberg_2004,EtnyreSymplecticFillings, etnyre_min_mukherjee_2022, LiscaMatic} but we decided to use a more elementary approach here which does not rely on symplectic topology.
    
    Let $Z$ be the $4$-manifold with boundary defined by the Kirby diagram in \Cref{fig:Z}~(a).   Let~$T\subset \mathrm{int}(Z)$ be the embedded torus obtained by capping the genus one Seifert surface for the red trefoil knot with the core of the handle attached along it. From the diagram it is clear  that $[T]\neq 0 \in H_2(Z)$ and $[T]^2 = 0$.
    
    For any $n\in \N$ we define the knot $K(n)$ to be the twist knot with Alexander polynomial $\Delta_{K(n)} = -(2n-1) + n(t+t^{-1})$, and $E_{K(n)}$ to be the knot exterior of $K(n)$ in $S^3$.  Then we define $Z_n:=E(K)\times S^1\cup_\partial (Z\sm \nu T)$ to be the manifold obtained by performing knot surgery \cite{FintushelSternKnotSurgery}
    on the torus $T$ using the knot $K(n)$.  Since the knot surgery only changes the manifold in the interior, we have an identification $\partial Z_n \cong Y:= \partial Z$.

\begin{figure}
    \centering
    \begin{tikzpicture}[scale=1]   
        \node at (0,0) {\includegraphics[width=\textwidth]{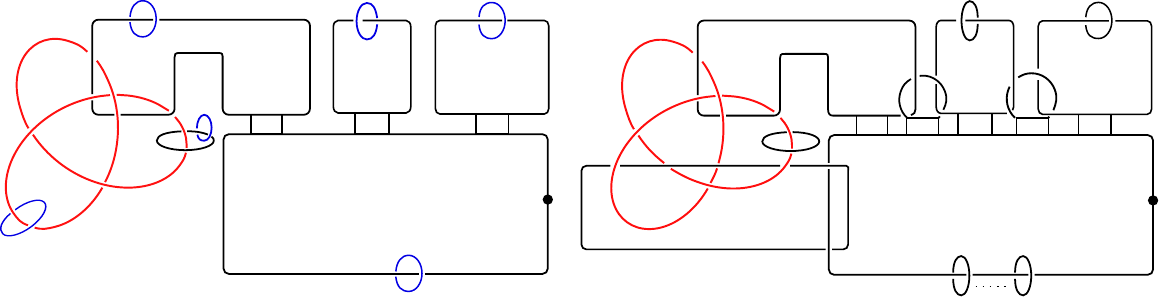}};
        % \draw[step=0.5, black, thin] (-8,-3) grid (8,3);
        % \foreach \x in {-8,-7,-6,-5,-4,-3,-2,-1,0,1,2,3,4,5,6,7,8}
        %     \node at (\x, -3.5) {$\x$};
        % \foreach \y in {-3,-2,-1,0,1,2,3}
        %     \node at (-8.5, \y) {$\y$};
        % %% Now place labels 
        %(a)
        \node at (-4.5,-2.3) {\textbf{(a)}};
        %Framings
        \node at (-7.4,1.6) {$0$};
        \node at (-4.5,1.95) {$0$};
        \node at (-5.11,-0.23) {$-2$};
        \node at (-2.3,1.95) {$0$};
        \node at (-0.5,1.95) {$0$};
        %Meridians
        \node at (-7.45,-1.4) {$\mu_1$};
        \node at (-5.84,2.15) {$\mu_2$};
        \node at (-1.15,2.15) {$\mu_3$};
        \node at (-5.05,0.6) {$\mu_4$};
        \node at (-2.8,2.15) {$\mu_5$};
        \node at (-2.23,-2.1) {$\mu_6$};
        %Twists
        \node at (-4.2,0.31) {$\scriptstyle 2$};
        \node at (-2.78,0.33) {$\scriptstyle 3$};
        \node at (-1.16,0.32) {$\scriptstyle 3$};
        %(b)
        \node at (3.5,-2.3) {\textbf{(b)}};
        %Framings
        \node at (0.4,1.4) {$0$};
        \node at (3.5,1.95) {$0$};
        \node[scale=0.9] at (3.01,0.35) {$-2$};
        \node at (7,2.15) {$-1$};
        \node at (5.8,1.95) {$0$};
        \node at (4.2,0.95) {$1$};
        \node at (6.52,0.95) {$1$};
        \node at (5.22,2.15) {$-1$};
        \node at (4.72,-1.95) {$-1$};
        \node at (6.28, -1.95) {$-1$};
        \node at (1.5,-1.55) {$-1$};
        \node at (7.5,1.95) {$0$};
        %Twists
        \node at (3.93,0.31) {$\scriptstyle 2$};
        \node[scale=0.8] at (4.61,0.295) {$\scriptstyle 3$};
        \node at (5.32,0.31) {$\scriptstyle 3$};
        \node[scale=0.8] at (6.10,0.295) {$\scriptstyle 3$};
        \node at (6.94,0.31) {$\scriptstyle 3$};
        %Curly bracket
        \node[rotate=90, scale=2] at (5.55,-2.18) {$\{$};
        \node at (5.55,-2.49) {$15$ copies};
    \end{tikzpicture}
    \caption{(a) Kirby diagram for the $4$-manifold $Z$, (b) Kirby diagram   showing an embedding of $Z$ into $K_3\# 2\ol{\CP}^2$.}
    \label{fig:Z}
\end{figure}

    \begin{prop}\label{prop:boyer}
        The $4$-manifolds $\{Z_n\}_{n\in\N}$ are all homeomorphic to $Z$ relative to $Y$, moreover they  are all simply-connected with intersection form $ (\Z^2\oplus \Z^2,\left[\begin{smallmatrix}
                 0 & 1 \\ 
                 1 & -2 \\
                \end{smallmatrix}\right]\oplus 0).$
    \end{prop}

    \begin{proof}
        In \cite{FintushelSternKnotSurgery} it is shown that knot surgery preserves the homeomorphism type of a closed, simply-connected manifold provided that the surgered torus has simply-connected complement, but an additional argument is required when the manifold has non-empty boundary.  The torus $T\subset Z$ embeds in a Gompf nucleus $N(2)$ \cite{GompfNuclei} by construction  and  so, in particular, it has simply-connected complement. This implies that all of the $Z_n$ are simply-connected by a routine Seifert-Van Kampen argument.  Boyer \cite[Thm.~0.7]{boyer_1986} tells us that there are two obstructions to extending the homeomorphism $\Id\colon Y\to Y$ to a homeomorphism $f\colon Z_n\to Z$.  The first is to find a `morphism', namely an isometry $\Lambda$ of the intersection forms such that
    \[
    \begin{tikzcd}
        H_2(Y) \arrow[equal]{d} \arrow[r] &  H_2(Z_n) \arrow[r] \arrow[d, "\Lambda"] &  H_2(Z_n)^* \arrow[r] & H_1(Y) \arrow[equal]{d} \\
        H_2(Y) \arrow[r] & H_2(Z) \arrow[r] & H_2(Z)^* \arrow[u,"\Lambda^*"] \arrow[r] & H_1(Y)
    \end{tikzcd}
    \]
    commutes, where the rows in the above diagram are the same rows used in \Cref{rem:minimality}. 
    
    We can find such a $\Lambda$ in the following way.  Define a map $g\colon Z_n\to Z$ as the identity on the complement of $T$ and as the standard degree one map from $E_{K(n)}\to \mathbb D^2\times \mathbb S^1$ (see \cite[Prop. 1]{boileau_boyer_rolfsen_wang}) times the identity map on the $\SS^1$-factor on $E_{K(n)}\times \SS^1$ (for the standard degree one map m.  
    Then $\Lambda:=g_*$ is an isometry of the intersection forms.  The fact that $\Lambda$ is a morphism in the sense of Boyer follows by replacing the third column in the above diagram by the relative homology groups and noting that this new diagram  commutes by naturality.  Note that the existence of an isometry of the intersection forms implies that all of the manifolds $Z_n$ are spin since they have even intersection forms ($Z$ having the stated intersection form can be verified by looking at the Kirby diagram in \Cref{fig:Z}).  Boyer's second obstruction vanishes if we can show that the unique spin structure on $Z_n$ and the unique spin structure on $Z$ restrict to the same spin structure on $Y$.  To show this, we will now be more explicit about the gluing maps used in the knot surgery.  
    
    We have an identification $\partial(E(K)\times \SS^1)\cong \SS^1\times \SS^1\times \SS^1$ where a longitude to $K$ is identified with the third $\SS^1$-factor, a meridian to $K$ is identified with the first $\SS^1$-factor, and the remaining $\SS^1$-factor is identified with the second $\SS^1$-factor.  Similarly, we have an identification $\partial (Z\sm \nu T)\cong T\times \SS^1\cong \SS^1\times \SS^1 \times \SS^1$ in the obvious way.  This gives an identification $\partial (E(K)\times \SS^1)\cong \partial (Z\sm\nu T)$ and we use this to perform the knot surgery.  Consider the unique spin structure $\sigma$ on $Z$ restricted to $\partial (Z\sm\nu T)\cong \SS^1\times \SS^1\times \SS^1$.  Note that $\sigma$ extends over $\nu T\cong T\times \mathbb D^2$, and hence there are four possibilities for $\sigma\vert_{\SS^1\times \SS^1\times \SS^1}$.  Conversely, there are four choices of spin structures on $E(K)\times \SS^1$, and, by restricting to the boundary, these give rise to four distinct spin structures on $\partial (E(K)\times \SS^1)\cong \SS^1\times \SS^1\times \SS^1$.  Using the degree one map $E(K)\times \SS^1\to T\times  \mathbb{D}^2$, we see that  these are precisely the four spin structures which extend over $T\times \mathbb D^2$, and so regardless of how $\sigma$ restricts to $\partial(Z\sm\nu T)$, we can pick a spin structure on $E(K)\times \SS^1$ such that $\sigma\vert_{Z\sm\nu T}$ extends to a spin structure on $Z_n$.  By construction, these two spin structures clearly match on $Y$, and hence we have a homeomorphism $Z\to Z_n$ which restricts to the identity map on $Y$.
    \end{proof}

    \begin{thm}\label{thm:FamilyZ_n} The $4$-manifolds $\{Z_n\}_{n\in \N}$ are all homeomorphic relative to $Y$, but pairwise not diffeomorphic relative to $Y$. They are all simply-connected with intersection form $ (\Z^2\oplus \Z^2,\left[\begin{smallmatrix}
                 0 & 1 \\ 
                 1 & -2 \\
                \end{smallmatrix}\right]\oplus 0)$
    and have infinite $\Tor(Z_n,Y)$.
    Moreover, all  non-trivial 
    elements of the Torelli group $\Tor(Z_n, Y)$ are  non-smoothable.
    \end{thm}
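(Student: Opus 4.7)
My plan is to apply \Cref{Lemma:KeyLemma} to an ambient closed manifold $X_n$ obtained by performing the same Fintushel--Stern knot surgery on the embedding $Z \hookrightarrow X := K_3 \# 2\overline{\CP}^2$ provided by \Cref{fig:Z}(b). First, I would extract the topological invariants of $Z$ from the Kirby diagram in \Cref{fig:Z}(a): a standard handle calculation shows $\pi_1(Z) = 1$, yields the stated intersection form, and gives $\pi_1(Z \setminus \nu T) = 1$ and $[T]^2 = 0$. The long exact sequence of the pair $(Z, Y)$, combined with the rank-$2$ null space of $\lambda_Z$ as in \Cref{rem:minimality}, yields $H_1(Y) \cong \Z^2$, so \Cref{thm:orson_powell_variations} gives $\Tor(Z_n, Y) \cong \Z$. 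Since the knot surgery is performed in the interior of $Z$, all $Z_n$ share the same boundary and intersection form, and van Kampen gives $\pi_1(Z_n) = 1$; a relative form of Freedman's classification (Boyer's theorem) then supplies a homeomorphism $Z_n \to Z_m$ that is the identity on~$Y$.

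For the closed embedding, I would verify from \Cref{fig:Z}(b) that $\pi_1(X \setminus Z) = 1$, and thus $H^1(X \setminus Z) = 0$. Let $X_n$ denote the result of the same Fintushel--Stern surgery applied to $X$; then $Z_n \subset X_n$ with $X_n \setminus Z_n \cong X \setminus Z$, so the third hypothesis of \Cref{Lemma:KeyLemma} holds. The Fintushel--Stern formula
\[
SW_{X_n} = SW_X \cdot \Delta_{K(n)}(\exp(2[T])),
\]
together with the fact that $X = K_3 \# 2\overline{\CP}^2$ has basic classes $\pm e_1 \pm e_2$ (where $e_i$ generate the $\overline{\CP}^2$-summands), yields pairwise distinct Seiberg--Witten invariants for the $X_n$: the Alexander polynomials $\Delta_{K(n)} = -(2n-1) + n(t + t^{-1})$ assign pairwise distinct values to the corresponding basic classes. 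Extending any hypothetical rel-boundary diffeomorphism $Z_n \to Z_m$ by the identity on $X \setminus Z$ would produce a diffeomorphism $X_n \to X_m$, so the $Z_n$ are pairwise non-diffeomorphic rel~$Y$.

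To apply \Cref{Lemma:KeyLemma}, I would choose $\mathfrak{s} \in \Spinc(X_n)$ with $c_1(\mathfrak{s}) = e_1 + e_2 + 2j\PD[T]$ for some $j$ in the support of $\Delta_{K(n)}$ (ensuring $\cI(X_n, \mathfrak{s}) := SW(X_n, \mathfrak{s}) \neq 0$), and verify that $i^*_{Y,X_n}(c_1(\mathfrak{s}))$ is non-torsion in $H^2(Y)$. By Poincar\'e--Lefschetz duality, the kernel of $H^2(Z_n) \to H^2(Y)$ is the image of the adjoint $\lambda^{\ad}_{Z_n}$, so this condition reduces to showing that the pullback of $c_1(\mathfrak{s})$ to $Z_n$ pairs non-trivially with some class in the null space of the intersection form of $Z_n$. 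Since $b_1(Y) = 2$, the strong conclusion of \Cref{Lemma:KeyLemma} then yields that every non-trivial element of $\Tor(Z_n, Y)$ is non-smoothable, completing the proof.

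The main obstacle I expect is this last verification: computing the pairings of the restrictions of $e_1 + e_2$ and $\PD[T]$ with the null space of $\lambda_{Z_n}$ requires an explicit handle-level tracing through the embedding in \Cref{fig:Z}(b). A secondary, though folklore, point is arranging the Freedman-type homeomorphism $Z_n \to Z_m$ to be the identity on the boundary, which relies on a relative version of Boyer's classification.
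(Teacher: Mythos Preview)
Your proposal is correct and follows essentially the same route as the paper: the embedding $Z\hookrightarrow K_3\#2\overline{\CP}^2$ from \Cref{fig:Z}(b), the Fintushel--Stern/blow-up computation of $SW(X_n)$, and the application of \Cref{Lemma:KeyLemma} are exactly what the paper does. The two obstacles you flag are precisely the points the paper handles explicitly: for the non-torsion restriction it takes $\mathfrak{s}$ with $c_1(\mathfrak{s})=E_1+E_2$ (your $j=0$, using the nonzero constant term of $\Delta_{K(n)}$) and computes $\partial\circ\PD(E_1+E_2)$ in terms of the meridians $\mu_i$ of \Cref{fig:Z}(a), finding a nonzero element of $H_1(Y)\cong\Z^2$; for the rel-$Y$ homeomorphism it invokes Boyer's theorem, producing the required isometry as $g_*$ for the degree-one collapse map $g\colon Z_n\to Z$ and then checking the secondary spin obstruction. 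One small simplification: rather than arguing $\pi_1(X\setminus Z)=1$ from the diagram, the paper observes that $Q=X\setminus\mathrm{int}(Z)$ is built from $Y$ by attaching only $2$-handles and a $4$-handle, so $H^1(Q)\cong H_3(Q,Y)=0$ directly.
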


    \begin{proof} The first part of the first statement follows directly from \Cref{prop:boyer}.  In \Cref{fig:Z}(b) we depict an embedding of $Z$ into $X := K_3\#2\ol{\CP}^2$, whose Kirby diagram has been taken from \cite[Fig. 8.16]{gompf_stipsicz_1999} (see also \cite{AKMRStableIsotopy}).
    Hence $Z_n$ embeds into the closed manifold $X_n$ obtained by performing knot surgery on $T\hookrightarrow X$ using $K(n)$.
    From \cite{FintushelSternKnotSurgery} and the blow-up formula \cite{fintushel_stern_blowup}, it follows that the manifolds $X_n$ are pairwise non-diffeomorphic.  Indeed, the Seiberg-Witten invariant of $X_n$, seen as an element of the group ring $\Z[H^2(X_n)]$, is equal to 
    \begin{equation}\label{eq:SWOfX_n}
        SW(X_n) = (E_1+E_1^{-1})(E_2+E_2^{-1})\left(-(2n-1) + n(F^2+F^{-2})\right),
    \end{equation}
    where $E_i\in H^2(X_n)$ are the classes coming from the two blow-ups and $F:=\PD[T]$ is the Poincar\'e dual to the torus $T$.
    Now, since $X_n$ is obtained by capping $Z_n$ with a fixed manifold $Q:= X\sm Z$ independent from $n$,  the manifolds $Z_n$ are pairwise non-diffeomorphic relative to their boundaries.
    
    It remains to prove the last statement of the theorem.  We want to apply \Cref{Lemma:KeyLemma}, so we check that the hypotheses hold.  We have that $H_1(Y)$ is isomorphic to $\Z^2$ generated by $v_1:= \mu_2+\mu_3$ and $v_2 := \mu_5$, where the $\mu_i$
    are the meridians to the components as shown in \Cref{fig:Z}(a). In particular from Theorem~\autoref{thm:orson_powell_variations} this implies that $\Tor(Z_n,Y)\cong \Z$ and hence is infinite.
    From \eqref{eq:SWOfX_n} we see that $E_1+E_2\in H^2(X_n)$ ($E_1E_2$ in group ring notation) is a Seiberg-Witten basic class for~$X_n$, which satisfies $\PD i^*_{Y,X}(E_1+E_2) = \mu_3+\mu_5$, because $\PD (E_1)$ and $\PD(E_2)$ are represented by the cores of the 2-handles which in Figure~\ref{fig:Z}(b) are attached along $-1$-framed curves corresponding to $\mu_3$ and $\mu_5$ union the obvious discs in $\mathrm{int}(Z_n)$ that those curves bound. Hence the intersection of these surfaces with $Y$ yields $\mu_3$ and $\mu_5$.

    The dotted circle in Figure~\ref{fig:Z}(a) gives the relation $2\mu_2+3\mu_3 + 3\mu_5 = 0$ in $H_1(Y)$  \cite[Prop. 5.3.11]{gompf_stipsicz_1999}, thus $\mu_3+\mu_5 = -2(v_1+v_2)\neq 0$ and  hence is non-torsion. 
    Moreover, the complement $Q$ is
    obtained by adding only $2$-handles and a single $4$-handle to $Y$ and
    hence $H_3(X_n,Z_n)\cong H_3(Q,Y) \cong H^1(Q)=0$. Now the final statement of the theorem follows from \Cref{Lemma:KeyLemma}.
    \end{proof}
   
    \section{Generalised Dehn twists}\label{sec:gen_dehn_twists}
\subsection{Absolute non-smoothability and generalised Dehn twists}
We begin by reviewing absolute and relative smoothability from the
point of view of spaces of maps. Given a smooth, compact, oriented manifold $X$ with boundary  (not necessarily of dimension four) we denote by $\Diff^+(X)$  the set of orientation-preserving self-diffeomorphisms of $X$ topologised with the $C^\infty$-topology, and by $\Diff^+(X,\partial X)\subset \Diff^+(X)$ the subspace of diffeomorphisms restricting to the identity over $\partial X$. 
Similarly we define $\Homeo^+(X)$ and $\Homeo^+(X,\partial X)$ using the compact-open topology. 
The inclusion map 
\begin{equation*}
    (\Diff^+(X), \Diff(X,\partial X)) \to (\Homeo^+(X), \Homeo^+(X,\partial X))
\end{equation*}
 is continuous.  We will denote by 
\begin{equation*}
\begin{split}
     &\Phi:\pi_0 \Diff^+(X)\to \pi_0\Homeo^+(X)\\
   & \Phi_\partial: \pi_0 \Diff^+(X,\partial X)\to \pi_0\Homeo^+(X,\partial X)\\
\end{split}
\end{equation*}
the induced maps on the mapping class groups.

With these definitions in place, (relative) non-smoothability can be given an alternative definition by saying that $\varphi \in \pi_0\Homeo^+(X,\partial X)$ is non-smoothable if $\varphi \not \in \Image{\Phi_\partial}$.

\begin{definition}
    Let $i\colon \pi_0\Homeo^+(X,\partial X)\to \pi_0\Homeo^+(X)$ be the map induced by the inclusion.  We say that $\varphi \in \pi_0\Homeo^+(X,\partial X)$ is \emph{absolutely} non-smoothable if $i(\varphi) \in \pi_0\Homeo^+(X)$ does not belong to $\Image{\Phi}$.
\end{definition}
Explicitly, the difference between a relatively and an absolutely smoothable homeomorphism is that in the latter case the isotopy at each time does not need to fix the boundary pointwise.
Surprisingly, for $4$-dimensional manifolds the two notions of absolute and relative non-smoothability coincide. An important role in the  proof 
is played by generalised Dehn twists \cite[Sec.~1.2]{orson_powell_2023}, which we now review.

\begin{definition}
    Let $X$ be a compact, smooth, oriented $n$-manifold with boundary. Given  $[\gamma]\in\pi_1 \Diff^+(\partial X)$, we define the \emph{generalised Dehn twist} with respect to $[\gamma]$ to be the smooth isotopy class of the diffeomorphism $\varphi_\gamma\colon X\to X$ defined on a collar of $\partial X$ as  $\varphi(y,t)=(\gamma(t)(y),t)\in (\partial X)\times I$ and defined outside of the collar as the identity map. 
\end{definition}

Another point of view is the following. 
The sequence of inclusion and restriction 
\begin{equation*}
    \Diff^+(X,\partial X){\to} \Diff^+(X) \to \Diff^+(\partial X)
\end{equation*}
and the equivalent sequence in the topological category are fibration sequences \cite{lashof_1976}.  It can be shown that the connecting morphism $\pi_1\Diff^+(\partial X)\to \pi_0\Diff^+(X,\partial X)$ of the long exact sequence of homotopy groups
is precisely the map that associates to a loop of diffeomorphisms $[\gamma]$ its generalised Dehn twist $[\varphi_\gamma]$ \cite[Sec.~1.4]{orson_powell_2023}.

\begin{lem} \label{lem:absolutelyNonSmoothable} 
Let $X$ be a compact, smooth, oriented,  $4$-manifold with boundary.  Then a mapping class $\varphi \in \pi_0 \Homeo^+(X, \partial X)$ is (relatively) non-smoothable if and only if it is absolutely non-smoothable. 
\end{lem}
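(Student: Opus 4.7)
The forward implication is immediate, since any isotopy rel boundary is in particular an isotopy; equivalently, if $\varphi$ is relatively smoothable then it is absolutely smoothable. The content lies in the converse, and the plan is to prove it by a diagram chase on the long exact sequences of homotopy groups associated to the two fibration sequences recalled in the excerpt. Concretely, consider the commutative diagram with exact rows and vertical maps induced by inclusion:
\[
\begin{tikzcd}[column sep=small]
\pi_1\Diff^+(\partial X) \arrow[r,"\partial_D"] \arrow[d,"\alpha"] & \pi_0\Diff^+(X,\partial X) \arrow[r,"j_D"] \arrow[d,"\beta"] & \pi_0\Diff^+(X) \arrow[r,"r_D"] \arrow[d,"\gamma"] & \pi_0\Diff^+(\partial X) \arrow[d,"\delta"] \\
\pi_1\Homeo^+(\partial X) \arrow[r,"\partial_H"] & \pi_0\Homeo^+(X,\partial X) \arrow[r,"j_H"] & \pi_0\Homeo^+(X) \arrow[r,"r_H"] & \pi_0\Homeo^+(\partial X).
\end{tikzcd}
\]

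Given $\varphi \in \pi_0\Homeo^+(X,\partial X)$ which is absolutely smoothable, we have $j_H(\varphi) = \gamma([f])$ for some $[f] \in \pi_0\Diff^+(X)$. By commutativity and exactness, $\delta(r_D[f]) = r_H(j_H(\varphi)) = 0$; if $\delta$ is injective then $r_D[f] = 0$, so by exactness $[f] = j_D([f'])$ for some $[f']\in \pi_0\Diff^+(X,\partial X)$. The class $g := \varphi\cdot \beta([f'])^{-1}$ then lies in $\ker j_H = \Image\partial_H$, so $g = \partial_H([\gamma])$ for some $[\gamma]\in \pi_1\Homeo^+(\partial X)$. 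Assuming further that $\alpha$ is surjective, we lift $[\gamma]$ to some $[\tilde\gamma]\in\pi_1\Diff^+(\partial X)$ and obtain $g = \beta(\partial_D[\tilde\gamma])$ from commutativity, so that $\varphi = \beta(\partial_D[\tilde\gamma]\cdot[f'])\in \Image\beta$. This exactly says that $\varphi$ is relatively smoothable.

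The chase has thus reduced matters to two assertions about the closed oriented $3$-manifold $\partial X$: that $\pi_0\Diff^+(\partial X)\to \pi_0\Homeo^+(\partial X)$ is injective (two diffeomorphisms of $\partial X$ that are topologically isotopic are smoothly isotopic) and that $\pi_1\Diff^+(\partial X)\to \pi_1\Homeo^+(\partial X)$ is surjective (every topological loop of self-homeomorphisms of $\partial X$ based at the identity is homotopic to a smooth loop). Both follow from the classical fact that for any compact $3$-manifold $M$ the inclusion $\Diff(M)\hookrightarrow \Homeo(M)$ is a weak homotopy equivalence, itself a consequence of Moise's smoothing theorem together with the resolution of the Smale conjecture and the subsequent analysis of $3$-manifold diffeomorphism groups by Cerf, Hatcher, Ivanov, and others. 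Locating a convenient citable statement of this $3$-dimensional equivalence at the $\pi_0$ and $\pi_1$ levels is, I expect, the main delicate point of the write-up; once it is invoked, the rest of the argument is purely formal and admits the geometric paraphrase suggested by the identification of $\partial_H$ with the generalised Dehn twist map recalled in the excerpt: smooth the restriction to $\partial X$ of an absolute isotopy $H_t$ from $\varphi$ to $f$, extend that smoothing through a collar to modify $f$ into a diffeomorphism fixing the boundary, and absorb the remaining topological Dehn-twist discrepancy by a smooth generalised Dehn twist.
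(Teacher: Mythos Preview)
Your proof is correct and follows essentially the same approach as the paper: both set up the commutative ladder of long exact sequences for the restriction fibrations and perform the identical diagram chase, reducing to the classical fact that $\Diff(\partial X)\hookrightarrow\Homeo(\partial X)$ is a weak homotopy equivalence in dimension three. The only cosmetic difference is that the paper invokes the full equivalence to replace the outer vertical maps by equalities, whereas you isolate precisely the injectivity of $\delta$ and surjectivity of $\alpha$ that the chase actually uses.
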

\begin{proof}
    We will prove that relative non-smoothability implies absolute non-smoothability, the other implication is clear. 
    
     We have the following  commutative diagram with exact rows:
     
    \begin{tikzcd}
       \pi_1 \Diff^+(\partial X) \arrow[r]\arrow[equal]{d} & \pi_0 \Diff^+(X,\partial X) \arrow[r, "i"]\arrow[d,"\Phi_\partial"] & \pi_0 \Diff^+(X) \arrow[r, "\partial"] \arrow[d, "\Phi"]& \pi_0 \Diff^+(\partial X) \arrow[equal]{d} \\
        \pi_1 \Diff^+(\partial X) \arrow[r] & \pi_0 \Homeo^+(X,\partial X) \arrow[r,"i"] & \pi_0 \Homeo^+(X) \arrow[r, "\partial" ]& \pi_0 \Diff^+(\partial X)  
    \end{tikzcd}
    \newline
    where $i$ denotes  the maps induced by the inclusion and we are implicitly using the well known homotopy equivalence $\Homeo(Y)\simeq \Diff(Y)$ for  any $3$-manifold $Y$  \cite{cerf_1959, HatcherSmaleConj}. Note that this is where we need the assumption that $\dim X = 4$. 
    
    Let $\phi \in \pi_0 \Homeo^+(X,\partial X)$ be non-smoothable.
    Suppose for a contradiction that there exists $\psi \in \pi_0\Diff^+(X)$ such that
    $\Phi(\psi) = i(\phi)$.
    Then since $\partial (i(\varphi)) = \Id_{\partial X}$,  the commutativity and exactness of the diagram implies that there exists $\psi'\in \pi_0\Diff^+(X,\partial X)$ 
    such that $i(\psi') = \psi$. 
    Hence $\Phi_\partial(\psi') $ is equal to  $ \phi$  modulo composition
    with an element in the image of $\pi_1 \Diff^+(\partial X) \to \pi_0 \Homeo^+(X,\partial X)$. Thus $\Phi_\partial(\psi') = [\varphi_\gamma] \circ \phi  $ for some generalised Dehn twist $[\varphi_\gamma]$, but then 
    $\phi = [\varphi_\gamma]^{-1}\circ\Phi_\partial(\psi')$ presents $\phi$ as a composition of diffeomorphisms, contradicting the non-smoothability of $\phi$.\end{proof}
    \subsection{Realising smoothable elements of the Torelli group by generalised Dehn twists}
Since generalised Dehn twists are supported in a collar of the boundary, it is clear that these give rise to smooth elements in the Torelli group of the $4$-manifold.  One could ask whether generalised Dehn twists generate the whole  Torelli group. 
The next proposition gives an answer under the assumption that the boundary is connected and prime; the general case is still unknown to the authors' best knowledge. 

\begin{prop} \label{Prop:TorDehnrealised} Let $X$ be a smooth, compact, simply-connected, oriented $4$-manifold with connected and prime boundary $Y$. Then the topological Torelli group $\Tor(X,Y)$ is realised by generalised Dehn twists  if and only if one of the following holds:
\begin{enumerate}
    \item $b_1(Y)<2$,
    \item $b_2(Y) = 2$ and $Y$ is Seifert fibered with base orbifold  $\mathbb T^2$,
    \item $Y = \mathbb T^3$,
\end{enumerate}
where $\mathbb{T}^n$ denotes the $n$-torus.
\end{prop}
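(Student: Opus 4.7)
The plan is to identify the image of the generalised Dehn twist map
\[
F\colon \pi_1 \Diff^+(Y) \longrightarrow \Tor(X,Y) \cong \Lambda^2 H_1(Y)^*
\]
by a case analysis based on Thurston's geometrisation of $Y$. First I would derive an explicit formula for $F$: given $[\gamma] \in \pi_1 \Diff^+(Y)$, the variation associated to $\varphi_\gamma$ is obtained, in analogy with the proof of \Cref{lem:gluing_lemma}, by sweeping $1$-cycles in $Y$ along $\gamma$ to produce $2$-cycles lying inside the collar $Y\times [0,1]$. Concretely, the associated skew pairing is
\[
\eta_\gamma(c_1,c_2) \;=\; [\mathrm{sweep}_\gamma(c_1)]\cdot [c_2]\in \Z,
\]
the intersection number in $Y$ between the $2$-cycle obtained by transporting a representative of $c_1$ under $\gamma$ once around the loop, and a representative of $c_2$.

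Next I would reduce the problem to isometry groups. Since $Y$ is prime and orientable, either $Y \cong \SS^1\times \SS^2$, in which case $b_1(Y)=1$ and $\Tor(X,Y)=0$ so case~(1) holds trivially, or $Y$ is irreducible. In the irreducible case, $Y$ carries a unique Thurston geometric structure $g$ and the various instances of the generalised Smale conjecture (Hatcher, Ivanov, Gabai, McCullough--Rubinstein, Bamler--Kleiner) combine to give a weak homotopy equivalence $\Diff_0(Y)\simeq \mathrm{Isom}_0(Y,g)$. Thus $\pi_1\Diff^+(Y)$ can be read off from Thurston's classification.

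The heart of the argument is then a case analysis over the eight Thurston geometries. When $Y$ is hyperbolic, Mostow rigidity gives $\mathrm{Isom}(Y)$ finite, so the image of $F$ is torsion and vanishes in the free abelian group $\Lambda^2 H_1(Y)^*$; realisability therefore forces $\Tor(X,Y)=0$, giving case~(1). The spherical, $\mathrm{Sol}$, $\SS^2\times \R$, and non-torus flat geometries all satisfy $b_1(Y)\leq 1$, again case~(1). When $Y=\mathbb{T}^3$, the translation action yields $\pi_1\Diff^+(Y)\supset \Z^3$, and the variation formula identifies the image with the alternating forms $(c_1,c_2)\mapsto v\cdot c_1\cdot c_2$ (triple intersection on $\mathbb{T}^3$) as $v$ ranges over $H_1(\mathbb{T}^3)$; these span $\Lambda^2 H_1(\mathbb{T}^3)^*$, giving case~(3). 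For the remaining Seifert-fibered geometries ($\mathbb{H}^2\times \R$, $\widetilde{\mathrm{SL}}_2\R$, $\mathrm{Nil}$), $\mathrm{Isom}_0(Y)=\SS^1$ acting by rotation in the fibre, so the image of $F$ is cyclic, generated by the pairing $\eta_f(c_1,c_2)=f\cdot c_1\cdot c_2$, where $f$ is the fibre class. A hyperbolic base orbifold forces $b_1(Y)\geq 4$ so $\Lambda^2 H_1(Y)^*$ has rank $\geq 6$ and $F$ cannot surject; a spherical base orbifold gives $b_1(Y)\leq 1$ (case~(1)); the only remaining possibility is a Euclidean base with non-zero Euler class, i.e.\ $Y$ a non-toral Nil manifold, where $b_1(Y)=2$ and a direct computation---using a horizontal section of the Seifert fibration to identify generators of $H_1(Y)/\Torsion$---shows that the triple intersection $f\cdot c_1\cdot c_2$ generates $\Lambda^2 H_1(Y)^*\cong \Z$, giving case~(2).

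The main obstacle is rigorously establishing the variation formula in the first step, making the sweep construction precise so that it genuinely computes the element of $\Tor(X,Y)$ defined in \Cref{sec:variations}, and verifying the non-vanishing of the triple intersection pairing in the Nil case, where $f$ is torsion in $H_1(Y)$ and one must either pass to rational coefficients or work directly with explicit geometric representatives lying in a section and a fibre of the Seifert fibration.
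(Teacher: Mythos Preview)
Your overall strategy---compute $\pi_1\Diff^+(Y)$ via geometrisation and the generalised Smale conjecture, then match it against $\Lambda^2 H_1(Y)^*$---is plausible, but there is a genuine gap in the case analysis, and the route is heavier than the paper's.

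The gap is the assertion that ``a hyperbolic base orbifold forces $b_1(Y)\geq 4$''. This is false: a hyperbolic $2$-orbifold can have underlying surface of genus one (a torus with at least one cone point), and Seifert fibered spaces over such orbifolds can have $b_1(Y)=2$. For instance, the manifold with Seifert data $\{g=1;\,(\alpha,\beta)=(2,1);\,b_0=0\}$ has $H_1\cong\Z^2$ and carries the $\widetilde{\mathrm{SL}}_2\R$ geometry. Your case split therefore omits an entire family of prime $3$-manifolds with $b_1=2$ that are \emph{not} Nil manifolds, and for these you still owe the computation of whether the fibre-rotation Dehn twist generates $\Lambda^2 H_1(Y)^*\cong\Z$. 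This is exactly the delicate point you flag at the end for the Nil case (the fibre class is torsion, or even zero, in $H_1$), but it arises for every genus-one base, not just the Euclidean one.

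By contrast, the paper avoids geometrisation altogether. Once $b_1(Y)\geq 2$ the manifold $Y$ is Haken, so Hatcher's theorem for sufficiently large $3$-manifolds already gives $\pi_1\Diff(Y)\cong Z(\pi_1(Y))$, and Waldhausen's result on the centre reduces everything to two cases: either the centre is trivial (so there are no nontrivial Dehn twists at all), or $Y$ is Seifert fibered with $Z(\pi_1(Y))\cong\Z$ generated by the regular fibre. A rank count disposes of $b_1(Y)>2$; for $b_1(Y)=2$ the paper cites \cite{BrydenLawsonPigottZvengrowksi} to identify the base and \cite[Prop.~8.9]{orson_powell_2023} for the variation of the $\SS^1$-action, rather than deriving a sweep formula from scratch. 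This route needs none of the Smale-type results beyond the Haken case and sidesteps the eight-way geometry split that caused your gap.
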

\begin{proof}
    We begin by showing that (1) or (3) implies that $\Tor(X,Y)$ is realised by generalised Dehn twists.  First suppose that $b_1(Y)<2$. Then $\Lambda^2H_1(Y)^* = 0$ and hence
    $\Tor(X,Y) $ is trivial. The case $Y= \mathbb T^3$ can be handled by applying  \cite[Prop. 8.9]{orson_powell_2023} to the three  generalised Dehn twists induced by the three $\SS^1$-factors. More precisely, let $\alpha_1,\alpha_2,\alpha_3$ be the basis of $H_1(Y)$ induced by the $\SS^1$-factors of $\mathbb T^3=\SS^1\times \SS^1\times \SS^1$ and let $\alpha_1^*, \alpha_2^*,\alpha_3^*\in H_1(Y)^*$ be the dual basis.  Then the element of $\Lambda^2 H_1(Y)^*$ associated
    to a rotation of the $i$-th $\SS^1$-factor is  $\pm \alpha_k^*\wedge \alpha_j^*$, where $k, j\neq i$ \cite[Prop.8.9]{orson_powell_2023}, and hence the three rotations generate the whole Torelli group.

    We now show that if neither (1) nor (3) hold, then either (2) holds or $\Tor(X,Y)$ is not realised by generalised Dehn twists. 
    So assume that $Y\neq \mathbb T^3$ and $b_1(Y)\geq 2$.
    In this case $Y$ is Haken \cite[1.1.6]{Waldhausen68} (therein called \emph{sufficiently large}),  and  \cite{HatcherSufficientlyLarge,laudenbach} implies that $\pi_1 \Diff(Y)\cong \pi_1(\mathrm{hAut}(Y))$.  Then it is a result of Gottlieb \cite[Thm.III.2]{gottlieb} that $\pi_1(\mathrm{hAut}(Y))\cong Z(\pi_1(Y))$, hence in particular is abelian.
    Then it follows from \cite[Satz 4.1]{WaldhausenZentrum} that either the center $Z(\pi_1(Y))$ is trivial or $Y$ is Seifert fibered over an orientable orbifold.
    In the former case, $b_1(Y)\geq 2$ implies that the Torelli group, being non-trivial, cannot be generated by generalised Dehn twists.
    In the latter case, $Z(\pi_1(Y))\cong \Z$ generated by a principal orbit of the $\SS^1$-action \cite{WaldhausenZentrum}, hence $\pi_1 \Diff(Y)\to \Tor(X,Y)$ cannot be surjective if $b_1(Y) > 2$, for in this case $\Tor(X,Y)$ has rank at least two.

    We finish by showing that (2) implies that $\Tor(X,Y)$ is realised by generalised Dehn twists.  When $b_1(Y)=2$ and $Y$ is Seifert fibered over an orientable orbifold, the quotient is necessarily $\mathbb T^2$ \cite{BrydenLawsonPigottZvengrowksi}. Moreover  the variation associated to the $\SS^1$-action is computed in \cite[Prop. 8.9]{orson_powell_2023} and 
    in this case it generates the whole of $\Lambda^2 H_1(Y)^*\cong \Z$.\end{proof}
    
    In particular, if the boundary satisfies any of the three conditions of \Cref{Prop:TorDehnrealised} then it is impossible to find a non-smoothable homeomorphism in the Torelli group.  
    
    Given the existence of non-smoothable elements of the Torelli group, we can say more.  It is possible to find smoothable elements of the Torelli group  which are not isotopic to any diffeomorphism supported on a collar of the boundary, let alone are realised by generalised Dehn twists.

    To state the next theorem, recall that given two homeomorphisms of connected $4$-manifolds $f\colon X_1\to X_1$ and $g\colon X_2\to X_2$, we can form the connect-sum homeomorphism $f\# g$ by first performing isotopies of $f$ and $g$ such that they restrict to the identity map on the discs used to perform the connect-sum.  This fact follows from isotopy extension \cite{edwards_kirby_1971}, uniqueness of normal bundles \cite[Chapter 9.3]{freedman_quinn_1990}, and the fact that any orientation-preserving diffeomorphism $f\colon S^3\to S^3$ is isotopic to the identity \cite{cerf_s3}.
    
\begin{thm}\label{thm:gen_dehn_twist}
    Let $X$ be a smooth, simply-connected, oriented,   compact $4$-manifold with boundary such that there exists a non-smoothable self-homeomorphism $\varphi\in \Tor(X,\partial X)$.  Then there exists an integer $m\geq 1$ such that  \[
    \varphi\# \Id\colon X\#m(\SS^2\times \SS^2)\to X\#m(\SS^2\times \SS^2)
    \]
    is a smoothable homeomorphism not isotopic to any smooth map supported on a collar of the boundary.
\end{thm}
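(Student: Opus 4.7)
The plan is to establish the theorem in two steps: (a) produce an integer $m\geq 1$ for which $\varphi\#\Id$ is smoothable on $X\#m(\SS^2\times\SS^2)$; and (b) show that for any such $m$, the stabilised homeomorphism cannot be isotopic rel $\partial$ to any diffeomorphism supported in a collar of the boundary.

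For step (a), I would invoke a stable smoothing result for simply-connected $4$-manifolds with boundary: after connect-summing with sufficiently many copies of $\SS^2\times\SS^2$, every element of $\pi_0\Homeo^+(X,\partial X)$ becomes smoothable. For Torelli elements this can also be seen directly by combining \Cref{thm:orson_powell_variations} with an explicit construction, since it suffices to produce a diffeomorphism of $X\#m(\SS^2\times\SS^2)$ realising the skew form $\eta_\varphi\in\Lambda^2 H_1(\partial X)^*$, and the additional homology carried by the $\SS^2\times\SS^2$-summands provides the flexibility needed to smoothly realise such a form.

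For step (b), I argue by contradiction. Suppose some diffeomorphism $\psi$ of $X\#m(\SS^2\times\SS^2)$, supported in a collar $C\cong \partial X\times[0,1]$ of $\partial X$, satisfies $[\psi]=[\varphi\#\Id]$ in $\pi_0\Homeo^+(X\#m(\SS^2\times\SS^2),\partial X)$. Performing the connect-sums in the interior of $X$ away from $C$, we may assume $\psi$ is the identity on each $\SS^2\times\SS^2$-summand and on each connecting $3$-sphere, so $\psi$ restricts to a diffeomorphism $\psi_0\colon X\to X$ supported on the collar $C\subset X$. Comparing the associated skew forms via the construction preceding \Cref{thm:orson_powell_variations}, and noting that the $\SS^2\times\SS^2$-summands are closed, attached at interior points, and acted on trivially, a direct calculation yields $\eta_{\Delta_{\varphi\#\Id}}=\eta_{\Delta_{\varphi}}$ and $\eta_{\Delta_{\psi}}=\eta_{\Delta_{\psi_0}}$ in $\Lambda^2 H_1(\partial X)^*$. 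Combined with $[\varphi\#\Id]=[\psi]$, this forces $\eta_{\Delta_{\varphi}}=\eta_{\Delta_{\psi_0}}$, so applying \Cref{thm:orson_powell_variations} now on $X$ gives $[\varphi]=[\psi_0]\in\Tor(X,\partial X)$. Since $\psi_0$ is a diffeomorphism this contradicts the non-smoothability of $\varphi$.

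The main obstacle is step (a): while the analogous smoothing statement for closed simply-connected $4$-manifolds is classical (Wall together with Perron--Quinn), a relative version for isotopies fixing the boundary pointwise requires either invoking a suitable boundary-preserving stable smoothing theorem or producing a direct smooth realisation of the required skew form using the new $\SS^2\times\SS^2$-classes. Step (b), by contrast, follows cleanly from the variation formalism developed in \Cref{sec:variations} once the stabilisation and the identification of $\psi_0$ are in place.
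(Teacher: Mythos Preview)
Your proposal is correct and follows essentially the same route as the paper. Step~(b) matches the paper's argument almost verbatim: assume a collar-supported diffeomorphism realises $[\varphi\#\Id]$, restrict to $X$, compare skew forms via \Cref{thm:orson_powell_variations}, and contradict non-smoothability of $\varphi$. For step~(a), you need not improvise a direct construction: the relative stable smoothing statement you want is exactly \Cref{lem:cs}, which the paper imports from \cite[Sec.~8.6]{freedman_quinn_1990} (with further detail in \cite{galvin_2023}), so what you flagged as the main obstacle is already available as a black box.
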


The proof relies on the following result.

\begin{lem}\label{lem:cs}
    Let $X$ be a smooth, simply-connected, compact, oriented $4$-manifold with boundary and $\varphi\colon X\to X$ a self-homeomorphism.  Then there exists an integer $m\geq 1$ such that \[\psi:=\varphi\#\Id\colon X\#m(\SS^2\times \SS^2)\to X\#m(\SS^2\times \SS^2)
    \]
    is isotopic to a diffeomorphism relative to the boundary.
\end{lem}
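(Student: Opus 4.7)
The plan is to invoke a classical stable smoothing principle for self-homeomorphisms of simply-connected smooth 4-manifolds: after connect-summing with enough copies of $\SS^2 \times \SS^2$, every such homeomorphism becomes isotopic relative to the boundary to a diffeomorphism. The underlying reason is that the gauge-theoretic obstructions to smoothability (such as those coming from Seiberg--Witten basic classes) all vanish after stabilization, e.g. by the Seiberg--Witten vanishing theorem applied to connect-sums with $\SS^2 \times \SS^2$, which has $b^+ = 1$.

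My strategy proceeds in two steps. First, I would invoke Wall's stable realization theorem \cite{wall_1964}: for $m$ sufficiently large, every automorphism of the intersection form of $X \# m(\SS^2 \times \SS^2)$ is realized by a diffeomorphism relative to the boundary. This produces $f \in \Diff^+(X \# m(\SS^2 \times \SS^2), \partial X)$ inducing the same action on $H_2$ as $\psi := \varphi \# \Id$; Wall's construction realizes such an $f$ as a composition of Dehn twists along Lagrangian spheres living in the stabilizing $\SS^2 \times \SS^2$ summands.

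Second, the composition $g := f^{-1} \circ \psi$ lies in the Torelli group $\Tor(X \# m(\SS^2 \times \SS^2), \partial X)$, and I would show that $g$ is isotopic to the identity relative to the boundary after possibly enlarging $m$. The key observation is that $g$ acts trivially on the homology of the stabilizing summands by construction, so an analogue of the Ruberman--Strle theorem \cite{ruberman_strle_2023} applies: any self-homeomorphism acting trivially on an $\SS^2 \times \SS^2$-summand is smoothable. Applying this gives $g$ isotopic to a diffeomorphism, hence $\psi$ isotopic to a composition of diffeomorphisms.

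The main obstacle is the second step, since Ruberman--Strle's theorem is stated for closed 4-manifolds, while our $X$ has boundary. To adapt it, one could either extend the Bauer--Furuta-based argument directly to the bounded setting, or reduce to the closed case via a capping construction (although this requires care, as $\pi_1(\partial X)$ need not be trivial). A more robust alternative is a pseudo-isotopy argument: the mapping cylinder of $g$ relative to the boundary is a topological $h$-cobordism from $X \# m(\SS^2 \times \SS^2)$ to itself, and its smoothing obstruction in the sense of Kirby--Siebenmann vanishes after further stabilization by dimension-5 smoothing theory, producing the desired smooth isotopy from $g$ to the identity.
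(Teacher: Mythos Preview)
The paper does not supply its own argument here; it simply cites \cite[Sec.~8.6]{freedman_quinn_1990} (the sum-stable smoothing theorem) and \cite{galvin_2023} for details. That result is proved by a direct geometric argument: one first isotopes the homeomorphism to a diffeomorphism away from finitely many points, and then absorbs each singular point into an $\SS^2\times\SS^2$ summand. It does not proceed via intersection forms or variations at all.

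Your two-step approach, by contrast, has a genuine gap at each step. For Step~1, Wall's theorem \cite{wall_1964} is stated for closed manifolds or for boundary an integer homology sphere; here $b_1(\partial X)\geq 2$ (otherwise the Torelli group is trivial and the whole discussion is empty), so the result you invoke does not apply as stated. Even granting an extension of Wall to this setting, you would only realise the induced isometry of the intersection form, not the full variation, so $g=f^{-1}\circ\psi$ lands in $\Tor(X\# m(\SS^2\times\SS^2),\partial X)$ as you say. But Step~2 then asks you to show that every Torelli element is smoothable after stabilisation, and your main tool \cite{ruberman_strle_2023} is again a result about \emph{closed} manifolds; adapting it to the bounded case is precisely the content you are trying to establish, and the paper's own results (\Cref{thm:FamilyX_n}) show that Torelli elements on manifolds with boundary can be non-smoothable, so this step cannot be trivial. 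Your fallback pseudo-isotopy sketch is closer in spirit to Freedman--Quinn, but the ``mapping cylinder of $g$'' is just a topological product and carries no obstruction by itself; what you need is to compare the two smooth structures at the ends, and making this precise \emph{is} the Freedman--Quinn sum-stable smoothing machinery, not an elementary consequence of Kirby--Siebenmann theory.
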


This result is proved in \cite[Sec.~8.6]{freedman_quinn_1990}.  For more details on the proof, see \cite[Proposition 2.23]{galvin_2024}.

%Alternative proof
\begin{proof}[Proof of \Cref{thm:gen_dehn_twist}]
    Let $\varphi\in \Tor(X,\partial X)$ be one of the non-smoothable mapping classes.  By \Cref{lem:cs} there exists an integer $m\geq 1$ such that \[
    \psi:=\varphi\#\Id\colon X\#m(\SS^2\times \SS^2)\to X\#m(\SS^2\times \SS^2)
    \]
    is isotopic to a diffeomorphism.  Since $\psi$ was defined by extending $\varphi$ via the identity onto the $\SS^2\times \SS^2$ summands, we also have that $\psi\in \Tor(X\#m(\SS^2\times \SS^2))$.  
    Now assume for a contradiction that   $\psi$ is supported on a collar of $\partial X\#m(\SS^2\times \SS^2)\cong \partial X$. Then we can remove the $\SS^2\times \SS^2$ summands and obtain a diffeomorphism  $\psi'\colon X\to X$.  However, since we have an identification \[
    \Tor(X,\partial X)\cong \Tor(X\#m(\SS^2\times \SS^2))\]
    we can see that $\Delta_{\psi'}=\Delta_{\varphi}$.  Hence, by \Cref{thm:orson_powell_variations} we see that $\psi'$ and $\varphi$ must be isotopic relative to the boundary.  This contradicts the assumption that $\varphi$ was not isotopic to a diffeomorphism, and so we conclude that $\psi$ is not isotopic to any smooth map supported on a collar of the boundary.
\end{proof}
    
As an immediate corollary, we have that there exist examples of $4$-manifolds with boundary where all elements of the Torelli group are smoothable, but all non-trivial elements are not supported on a collar of the boundary. This was \Cref{thm:2} from the introduction.

\begin{cor}
    There exists an infinite family of smooth, compact, oriented, simply-connected $4$-manifolds with connected boundary $(W_n,\partial W_n)$ and $\Tor(W_n,\partial W_n)$ infinite order such that all mapping classes in $\Tor(W_n,\partial W_n)$ are smoothable, but only the identity map is supported on a collar of the boundary and, in particular, only the identity map is realised by a generalised Dehn twist.
\end{cor}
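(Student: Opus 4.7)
The plan is to construct the family $\{W_r\}$ by stabilizing the manifolds $X_{n_r}$ of \Cref{thm:FamilyX_n} by sufficiently many copies of $\SS^2\times\SS^2$. Since $H_1(\partial X_{n_r})\cong\Z^2$, \Cref{thm:orson_powell_variations} gives $\Tor(X_{n_r},\partial X_{n_r})\cong\Lambda^2(\Z^2)^*\cong\Z$, and by \Cref{thm:FamilyX_n} every non-trivial element of this group is non-smoothable. Let $\varphi_r\in\Tor(X_{n_r},\partial X_{n_r})$ be a generator. Applying \Cref{lem:cs} to $\varphi_r$ yields an integer $m_r\geq 1$ such that $\varphi_r\#\Id$ is isotopic rel boundary to a diffeomorphism of $W_r:=X_{n_r}\# m_r(\SS^2\times\SS^2)$.

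Next I would verify that $\{W_r\}_{r\geq 1}$ has the required properties. Each $W_r$ is simply-connected with connected boundary, and since stabilization does not alter the boundary we have $\partial W_r=\partial X_{n_r}$, so by \Cref{thm:FamilyX_n} the boundaries of the $W_r$ are pairwise non-diffeomorphic and hence so are the $W_r$ themselves. By \Cref{thm:orson_powell_variations} the group $\Tor(W_r,\partial W_r)$ is again infinite cyclic, and the stabilization map $[\psi]\mapsto[\psi\#\Id]$ is the identity on Torelli groups under the Orson--Powell identification because the skew-symmetric pairing $\eta_{\Delta_\psi}$ depends only on the boundary. In particular $\varphi_r\#\Id$ generates $\Tor(W_r,\partial W_r)$, and since we arranged it to be smoothable, every element of $\Tor(W_r,\partial W_r)$ is smoothable.

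The step I expect to be the most delicate is showing that only the identity class in $\Tor(W_r,\partial W_r)$ is supported on a collar of the boundary. I would argue by contradiction: suppose some non-trivial $[\psi]\in\Tor(W_r,\partial W_r)$ admits a representative $\tilde\psi$ that is the identity outside a collar $\partial W_r\times[0,1)\subset W_r$. The connect-sum region defining the $\SS^2\times\SS^2$ summands lies in the interior of $W_r$ and can be arranged to be disjoint from such a collar, so $\tilde\psi$ is the identity on these summands, and restricting it to $X_{n_r}\subset W_r$ gives a self-diffeomorphism of $X_{n_r}$ rel boundary. Under the identification of Torelli groups described above, this restricted diffeomorphism represents the non-trivial class corresponding to $[\psi]$, yielding a smooth representative of a non-trivial element of $\Tor(X_{n_r},\partial X_{n_r})$; this contradicts \Cref{thm:FamilyX_n}. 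The key technical point is to check that variations transform naturally under this restriction, which should follow from \Cref{def:homeo_to_variation} together with the naturality of the inclusion $X_{n_r}\hookrightarrow W_r$ of a connect-summand at the level of $H_2$.
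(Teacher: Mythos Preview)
Your proposal is correct and follows essentially the same approach as the paper: the paper's proof is the single line ``Apply \Cref{thm:gen_dehn_twist} to the family $X_n$ from \Cref{thm:FamilyX_n}'', and what you have written is precisely an unpacking of the proof of \Cref{thm:gen_dehn_twist} specialised to that family, together with the observation (also used in the paper) that the Orson--Powell isomorphism identifies $\Tor(X_{n_r},\partial X_{n_r})$ with $\Tor(W_r,\partial W_r)$ via $[\psi]\mapsto[\psi\#\Id]$. Your version is in fact slightly more explicit than the paper's in that you run the collar-restriction contradiction for an arbitrary non-trivial $[\psi]$ rather than just the generator, which is exactly what is needed to conclude that \emph{only} the identity is collar-supported.
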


\begin{proof}
    Apply \Cref{thm:gen_dehn_twist} to the family $X_n$ from \Cref{thm:FamilyX_n}.
\end{proof}

\bibliographystyle{alpha}
\bibliography{bibliography.bib}
\end{document}